\documentclass{amsart}

\usepackage{latexsym,amssymb,graphicx,amscd,amsmath,amsthm}
\usepackage{amsmath,amssymb}

\newtheorem{thm}{Theorem}[section]
\newtheorem{lem}[thm]{Lemma}
\newtheorem{prop}[thm]{Proposition}
\newtheorem{cor}[thm]{Corollary}
\newtheorem{de}[thm]{Definition}
\newtheorem{rem}[thm]{Remark}

\newcommand{\BC}{{\mathbb{C}}}

\newcommand{\BZ}{{\mathbb{Z}}}

\newcommand{\CS}{{\mathcal{S}}}

\newcommand{\FD}{{\mathfrak{D}}}

\oddsidemargin 0.4in 
\evensidemargin 0.0in 
\textwidth 5.7in
\headheight 0.5in  
\topmargin -0.27in 
\textheight 9.0in

\begin{document}

\title{A cellular basis for the generalized  Temperley-Lieb Algebra and Mahler Measure}
\bigskip

\author{Xuanting Cai}
\thanks{The author is with Mathematics Department, Louisiana State University, Baton Rouge, Louisiana  70803(email: xcai1@math.lsu.edu).}
\author{Robert G. Todd}
\thanks{The author is with Mathematics Department, University of Nebraska, Omaha, Nebraska,  68131(email: rtodd@unomaha.edu ).}

\date{}
\thispagestyle{empty}

\begin{abstract}

	Just as the Temperley-Lieb algebra is a good place to compute the Jones polynomial, the Kauffman bracket skein algebra of a disk with $2k$ colored points on the boundary, each with color $n$, is a good place to compute the $n^{th}$ colored Jones polynomial. Here, this colored skein algebra is shown to be a cellular algebra and a set of separating Jucys-Murphy elements is provided. This is done by explicitly providing the cellular basis and the JM-elements. Having done this several results of Mathas on such algebras are considered, including the construction of pairwise non-isomorphic irreducible submodules and their corresponding primitive idempotents. These idempotents are then used to define recursive elements of the colored skein algebra. Recursive elements are of particular interest as they have been used to relate geometric properties of link diagrams to the Mahler measure of the Jones polynomial. In particular, a single proof is given for the result of Champanerkar and Kofman, that the Mahler measure of the Jones and colored Jones polynomial converges under twisting on some number of strands.

\end{abstract}

\maketitle

\hspace{.2 in} {\bf Keywords}: {\em Colored Jones polynomial, Generalized Temperley-Lieb algebra, Graph basis, Mahler measure}

\bigskip
\thispagestyle{empty}
\setcounter{page}{1}

\section{Introduction}

 	The Temperley-Lieb algebra is equipped with a standard inner product  and is a convenient context for computing the Kauffman bracket, and thus the Jones polynomial of the closure of a given tangle. Here we define sub-algebras of the Temperley-Lieb algebra which are good places to compute colored Jones polynomials. Furthermore theses algebras are shown to be cellular by providing an explicit basis.  A family of separating Jucys-Murphy elements is also provided.  As a consequence of cellularity and the existence of the separating JM-elements, a complete set of pairwise non-isomorphic irreducible modules is provided along with the corresponding set of primitive idempotents. These idempotents are then used in to the understand Mahler measure of the Jones and colored Jones polynomials of recursively defined links, which include links which are formed by adding multiple twists on a collection of strands. 
 
The paper is organized as follows; Section \ref{GTLA} reviews the skein theory needed for the paper and introduce the $k^{th}$ $i-$colored Tempelry-Leib algebra $TL_{(k,i)}$. Section \ref{GB} gives a basis for $TL_{(k,i)}$.  Section \ref{cellularity} shows that the basis given in section \ref{GB} is a cellular basis. Section \ref{JM} and section \ref{CC} provides the JM elements and some consequences of cellularity. Lastly, section \ref{MMMT} states the results on Mahler measure.


\section{Generalization of Temperley-Lieb algebra $TL_n$}
\label{GTLA}
The $n^{th}$ Temperley-Lieb algebra $TL_n$ is a standard object of study when considering the Jones polynomial of knots and links. $TL_n$ is the algebra over $\mathbb{Z}[A^{\pm1}]$ generated by the elements $1, e_{1}, \ldots, e_{n-1}$ with relations $e_{i}^{2}=\delta e_{i},\ e_{i}e_{i\pm 1}e_{i}=e_{i}$ and $e_{i}e_{j}=e_{j}e_{i}$ when $|i-j|>1$, where $\delta=-A^{-2}-A^{2}$. The Temperley-Lieb algebra can be given a diagrammatic interpretation as the Kauffman bracket skein algebra of a disk $I \times [0,1]$ with $2n$ fixed points on the boundary ($n$ points fixed on $I \times \{0\}$ and $I \times \{1\}$ respectively). In this case the generators are given in figure \ref{generator}. Moreover $TL_{n}$ is equipped with a natural bilinear form. Let $L$ and $F$ be elements of $TL_{n}$. Then the inner product is defined by computing the Kauffman bracket of the link formed as in figure \ref{InnerProd}. This bilinear form was shown to be non-degenerate in \cite{C}. In what follows we will extend the ground ring to $\mathbb{Q}(A)$ so that the resulting algebra is semi-simple.

\begin{figure}[ht]
\[ 
1 =
\begin{array}{c}
\includegraphics[width=1in,height=1.2in]{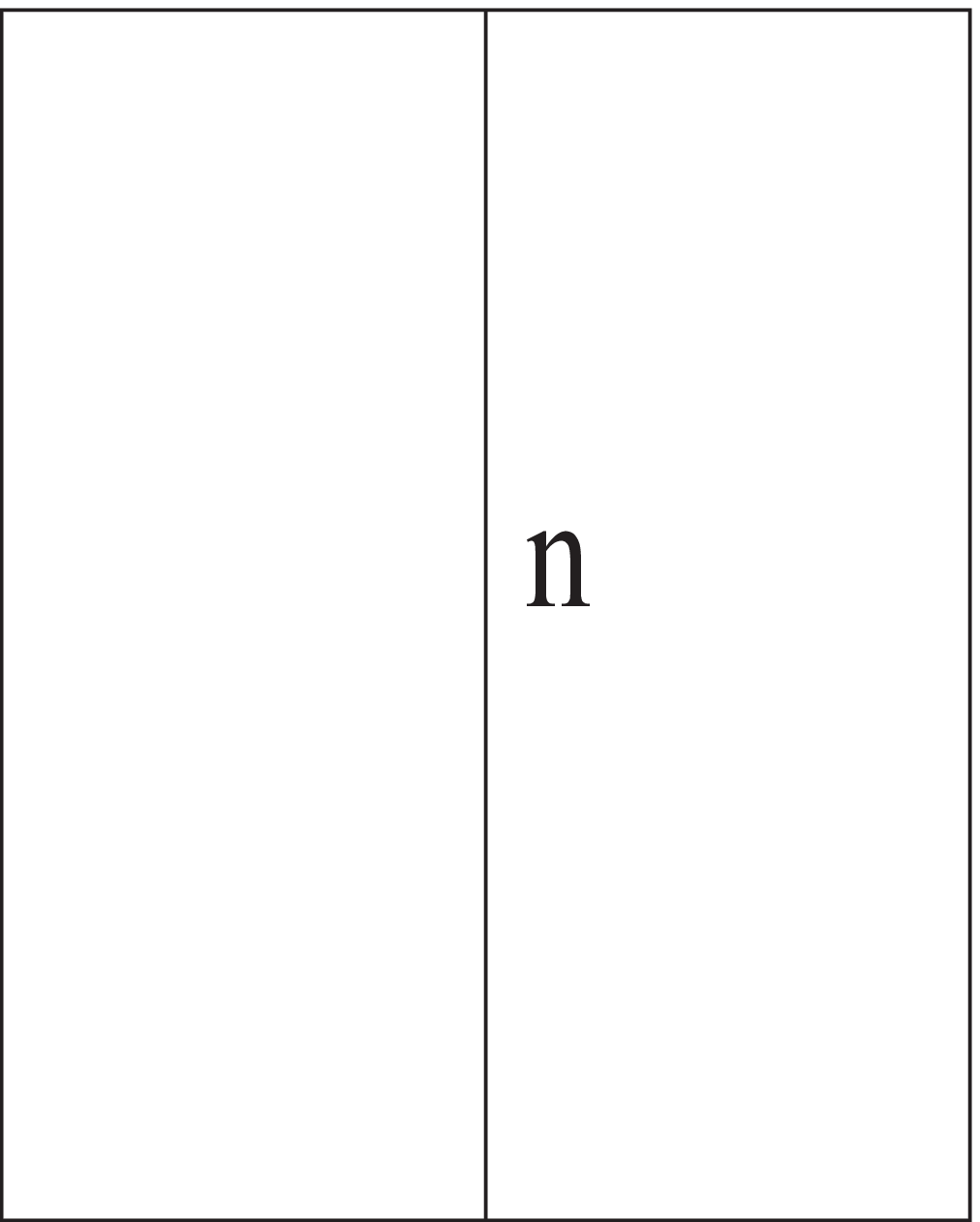}
\end{array}
e_i=
\begin{array}{c}
\includegraphics[width=1in,height=1.2in]{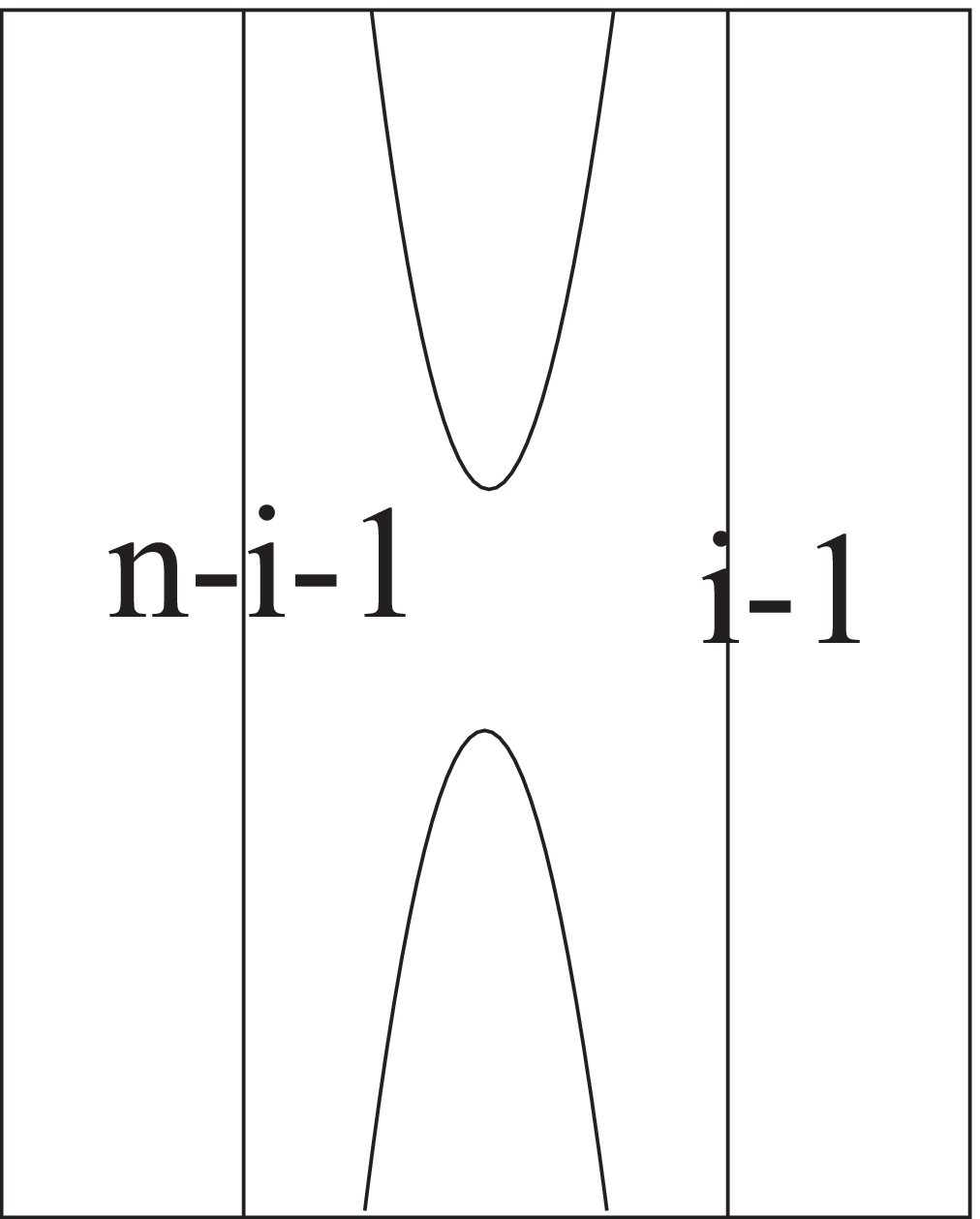}
\end{array}
\]
\caption{The integer $i$ beside the arc means $i$ parallel copies of the arc.}
\label{generator}
\end{figure}

\begin{figure}[ht]
\[\langle L,F\rangle=\langle\begin{array}{c}
 \includegraphics[scale=.75]{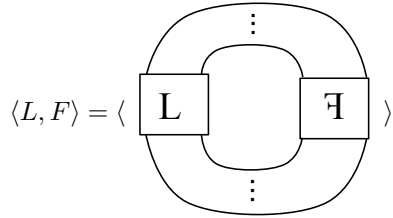} 
\end{array}\rangle
\]
\caption{The inner product via the Kauffman bracket}
\label{InnerProd}
\end{figure}

The inner product on $TL_{n}$ makes it a natural place to compute the Jones polynomial of a link. We want to find a natural place for computing the colored Jones polynomial of a link via an inner product. Recall that $TL_{n}$ contains a set of minimal idempotents $f_{1}, \ldots f_{n}$, constructed by Wenzl in \cite{Wenzl88}. $f_{n}$ is referred to as the $n^{th}$ Jones-Wenzl idempotent and provides a way to compute the $n^{th}$ colored Jones polynomial via the Kauffman bracket

\begin{de}
The $n^{th}$ colored Jones polynomial of link $L$ is defined to be
\begin{equation}
J_{n}(L)=A^{-(n^2+2n)w(L)}<L(n)>.
\notag
\end{equation}
Here $L(n)$ is the link $L$ with each component colored by $f_n$, $<>$ is the Kauffman bracket and $w(L)$ is the writhe of $L$.
\end{de}

Suppose that for particular link  $K$ with diagram $D$ there  are circles in the projection plane that divide the diagram into two $k$-tangles such that one may compute the Kauffman bracket of $D$ as $\langle D \rangle=\langle L, F\rangle$. To compute the $n^{th}$ colored Jones polynomial, color each component of D with the $n^{th}$ Jones-Wenzl idempotent. Coloring with an idempotent gives one the liberty to add an arbitrary number of idempotents to each component. Thus, color each component of the link $K$ with a copy of $f_{n}$ for each time it intersects the boundary of one of the circles. Then use the inner product in $TL_{kn}$ to compute the colored jones polynomial of the link $K$ as indicated in figure \ref{CKB}. 

\begin{figure}[ht]
\begin{center}
\includegraphics[scale=.5]{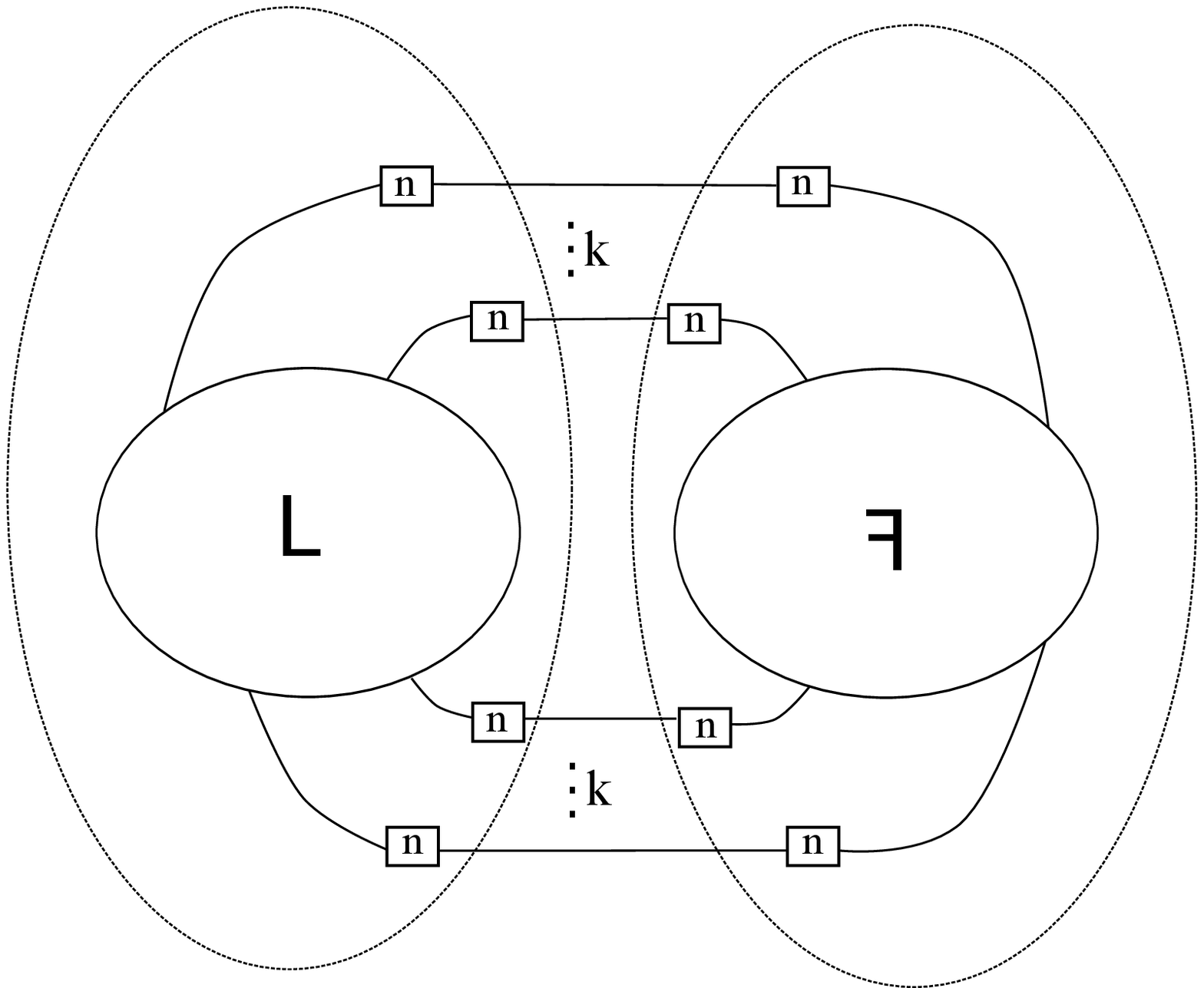}
\end{center}
\caption{}
\label{CKB}
\end{figure}

Motivated by the above observation consider the following map:

\begin{de}
Let $C_{i}:TL_{ik} \rightarrow TL_{ik}$ be the map where $C_{i}(T)$, for $T \in TL_{ik}$, is the element in figure \ref{embedding}, extended linearly in the obvious way.
\end{de}

\begin{thm}
For each $i$ $C_{i}$ is an algebra homomorphism.
\end{thm}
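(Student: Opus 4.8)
The plan is to reduce the claim to the diagram basis and then verify multiplicativity by a direct diagrammatic calculation, the only inputs being the idempotency $f_i^2=f_i$ and the absorbing relations $f_ie_j=e_jf_i=0$ (for the generators $e_j$ internal to a single bundle) of the Jones--Wenzl idempotents. Since $C_i$ is defined on diagrams and extended linearly, and since the product on $TL_{ik}$ is bilinear, it suffices to prove $C_i(D_1D_2)=C_i(D_1)C_i(D_2)$ for any two $ik$--strand Temperley--Lieb diagrams $D_1,D_2$, together with the unit condition $C_i(1)=F$, where $F=f_i^{\otimes k}$ denotes the decoration attaching one copy of $f_i$ to each of the $k$ bundles of $i$ strands along the boundary, as in the figure defining $C_i$. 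Recall that the product $D_1D_2$ is computed by stacking $D_1$ above $D_2$, identifying the $ik$ interior endpoints, and deleting each resulting closed loop at the cost of a factor $\delta$.

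The heart of the argument is the analysis of the seam where $C_i(D_1)$ is stacked on $C_i(D_2)$. After stacking, each of the $k$ interior bundles carries two adjacent copies of $f_i$, one from the lower decoration of $C_i(D_1)$ and one from the upper decoration of $C_i(D_2)$; by $f_i^2=f_i$ these collapse to a single copy on each bundle. I would then show that this leftover interior idempotent is inert, so that the stacked element equals the one obtained by decorating the raw composite $D_1D_2$, namely $C_i(D_1D_2)$. This is exactly where the absorbing relations enter: along an interior bundle that is a through--strand of the composite the idempotent slides freely and merges with the boundary copy of $f_i$, whereas along a bundle that closes up into a turnback the relation $f_ie_j=0$ forces the relevant terms to agree. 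The scalar bookkeeping must be tracked in parallel, since a closed loop carrying $f_i$ contributes the value of an $f_i$--colored unknot rather than $\delta$; one checks that these colored--loop factors are produced in equal numbers on the two sides.

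The main obstacle is precisely this seam analysis: proving that the single interior idempotent surviving the collapse can be removed without changing the element of $TL_{ik}$. I expect to carry it out bundle by bundle, splitting into the through--strand and turnback cases above and invoking the defining recursion for $f_i$ to resolve the turnback terms, with the colored--loop factors reconciled at each step. Once this is established, multiplicativity follows; and since $C_i(1)=F$ is the identity of the decorated subalgebra $F\,TL_{ik}\,F$ that is the image of $C_i$, the map is an algebra homomorphism onto that subalgebra, as claimed.
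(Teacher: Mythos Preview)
Your proposal aims at full multiplicativity, $C_i(D_1)C_i(D_2)=C_i(D_1D_2)$ for arbitrary $D_1,D_2\in TL_{ik}$, but this is more than the paper's one--line argument establishes and in fact more than is true. Writing $F=f_i^{\otimes k}$, the paper invokes only $f_i^2=f_i$, which gives
\[
C_i(T_1)\,C_i(T_2)=FT_1F\cdot FT_2F=FT_1FT_2F\in F\,TL_{ik}\,F,
\]
so the image $TL_{(k,i)}=F\,TL_{ik}\,F$ is closed under multiplication, with unit $F=C_i(1)$. That is all that is used thereafter: $TL_{(k,i)}$ is a unital subalgebra, and $C_i$ restricted to it is the identity, hence trivially a homomorphism. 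The idempotent property alone does \emph{not} allow one to erase the interior copy of $F$.

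Your seam analysis is precisely an attempt to show $FT_1FT_2F=FT_1T_2F$, and this fails. The dichotomy ``through--strand versus turnback'' applies to individual strands, not to an $i$--strand bundle as a whole: within a single bundle at the seam some strands may pass through while others cap off, and they may connect to several different bundles, so the copy of $f_i$ sitting there neither slides to the boundary nor is killed by the relation $f_ie_j=0$. A concrete counterexample: take $k=i=2$, so $F=(1-\delta^{-1}e_1)(1-\delta^{-1}e_3)\in TL_4$, and $T_1=T_2=e_2$. One computes
\[
e_2Fe_2=\bigl(\delta-\tfrac{2}{\delta}\bigr)e_2+\tfrac{1}{\delta^{2}}\,e_2e_1e_3e_2,
\]
and $F(e_2e_1e_3e_2)F$ is a nonzero element with no through--strands, whereas $Fe_2F$ has $e_2$ as a summand. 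Hence $Fe_2Fe_2F\neq \delta\,Fe_2F=C_i(e_2^{2})$, so $C_i$ is not multiplicative on all of $TL_{ik}$, and the seam argument cannot be completed.
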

\begin{proof}
This statement follows from the idempotent property of $f_{n}$.
\end{proof}

This observation leads to the following definition.

\begin{de}
Let the $k^{th}$ $i$-colored Temperley-Lieb algebra $TL_{(k,i)}$ be the image of $TL_{ki}$ under the map $C_{i}$.
\end{de}

\begin{rem}
The $n$th Temperley-Lieb Algebra $TL_n$ is then identified as the $n^{th}$ $1$-colored Temperly-Lieb algebra.
\end{rem}

\begin{figure}[ht]
\begin{center}
\includegraphics[scale=.5]{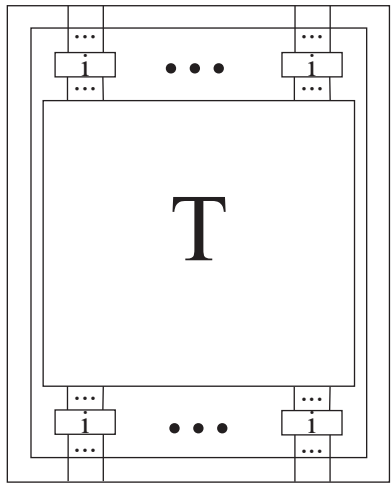}
\end{center}
\caption{Consider an element in $TL_{(k,i)}$ as an element in $TL_{ki}$.}
\label{embedding}
\end{figure}

  We can then equip $TL_{(k,i)}$ with the inner product that it inherits from $TL_{ki}$. This makes $TL_{(k,i)}$ a natural place to consider computations of the $i^{th}$ colored Jones polynomial of a link. 

\begin{rem}
The $k^{th}$ $i$-colored Temperley-Lieb algebra can also be viewed as the skein algebra of a disk with $2k$ points on the boundary each with color $i$. 
\end{rem}


\section{A Graph Basis Of $TL_{(k,i)}$}
\label{GB}

In \cite{C}, an orthogonal basis is constructed for the Temperley-Lieb algebra.
In this section, a similar basis is built for $TL_{(k,i)}$. In what follows, let an $i$ written next to a component denote it being colored with the $i^{th}$ Jones-Wenzl idempotent. The recoupling theory of such graphs is developed in \cite{kauffman94}. The reader is referred there for all but the most basic definitions, including the definitions of $\Delta_{n}$, $\theta(a,b,c)$ and $\lambda_{a}^{b,c}$.

\begin{de}
A triple of non-negative integers $(x,y,z)$ is called admissible if
\begin{enumerate}
\item $x+y+z$ is even;
\item $x\leq y+z$, $y\leq x+z$, $z\leq x+y$.
\end{enumerate}
\end{de}

 Given any admissible triple $(x,y,z)$ we may consider the following element in some Temperley-Lieb algebra.
 
 \begin{figure}[ht]
 \begin{center}
 \includegraphics[scale=.7] {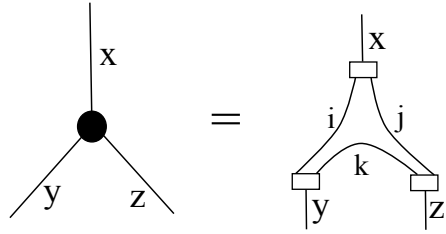}
 \end{center}
 \caption{An admissible colored trivalent graph}
 \label{ctg}
 \end{figure}

\begin{de}
Let $D_{a_1,...,a_{2n-1}}^i$ be the element of $TL_{(n,i)}$ in  figure \ref{D},
where $a_i$ satisfies:
\begin{enumerate}
\item $a_1=a_{2n-1}=i$;
\item $a_k\in \mathbb{N}$ for all $k$;
\item $(a_k,a_{k-1},i)$ is admissible for all $k$.
\end{enumerate}
Let $\FD(n,i)$ be the collection of all these elements.
\end{de}

\begin{figure}[ht]
\centering
\includegraphics{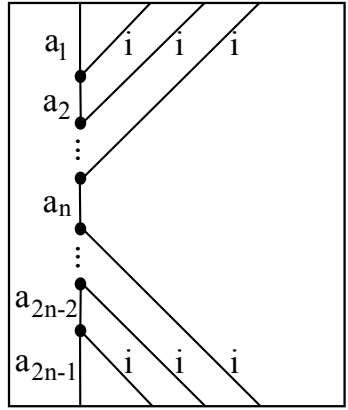}
\caption{At each black dot, the graph is admissible.}
\label{D}
\end{figure}

\begin{lem}
\label{norm}
Suppose $(a_1,...,a_{2n-1})$ and $(b_1,...,b_{2n-1})$ satisfy all conditions above,
then
\begin{eqnarray*}
&&<D_{a_1,...,a_{2n-1}}^i,D_{b_1,...,b_{2n-1}}^i>\notag \\
&=&\Delta_{a_{2n-1}}\prod_{j=1}^{2n-1}{\delta_{a_jb_j}}\prod_{j=1}^{2n-2}{\frac{\theta(a_{j+1},a_j,i)}{\Delta_{a_{j+1}}}}.
\end{eqnarray*}
Moreover, it is easy to see that it is nonzero.
\end{lem}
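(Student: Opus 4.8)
The plan is to realize the pairing as the Kauffman bracket of a single closed colored trivalent net, obtained by reflecting $D_{b_1,\ldots,b_{2n-1}}^i$ across the top boundary, stacking it on $D_{a_1,\ldots,a_{2n-1}}^i$, and joining the $2n$ boundary legs in pairs as in figure \ref{InnerProd}. Since every boundary leg carries color $i$, each of the $2n-2$ internal trivalent vertices of $D_a$ gets joined to the corresponding vertex of $D_b$ by an $i$-colored ``rung'', while the two end legs $a_1=b_1=i$ and $a_{2n-1}=b_{2n-1}=i$ close up into $i$-colored arcs. The resulting net is a ladder: two parallel spines colored $(a_1,\ldots,a_{2n-1})$ and $(b_1,\ldots,b_{2n-1})$, connected by $i$-colored rungs and capped at both ends.

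The engine of the computation is the bubble identity of recoupling theory (see \cite{kauffman94}): a subdiagram consisting of two trivalent vertices joined by two edges colored $x$ and $y$, with a single edge colored $c$ entering from below and $c'$ leaving above, may be replaced by $\delta_{c,c'}\,\theta(x,y,c)/\Delta_c$ times a single edge colored $c$. I would collapse the ladder one rung at a time, starting from the capped end where $a_1=b_1=i$. At the $j$-th step the left cap together with the $j$-th rung forms precisely such a bubble, whose two inner edges are the two incoming $i$-colored strands (or, after the first step, the already-merged spine edge $a_j$ together with the rung colored $i$) and whose outgoing edges are $a_{j+1}$ and $b_{j+1}$; applying the identity forces $a_{j+1}=b_{j+1}$ and extracts the factor $\theta(a_{j+1},a_j,i)/\Delta_{a_{j+1}}$. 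After $2n-2$ such moves the diagram reduces to a single circle colored $a_{2n-1}$, contributing $\Delta_{a_{2n-1}}$, and multiplying the collected factors yields the stated formula.

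Two points deserve attention. First, the orthogonality factor $\prod_j \delta_{a_j b_j}$ is automatic from this scheme: $a_1=b_1$ and $a_{2n-1}=b_{2n-1}$ hold by the end conditions, while each bubble collapse supplies exactly one $\delta_{a_{j+1},b_{j+1}}$, so a single mismatch anywhere along the spine annihilates the pairing. Second, the nonvanishing claim follows because over $\mathbb{Q}(A)$ both $\Delta_{a_{2n-1}}$ and every $\theta(a_{j+1},a_j,i)$ attached to an admissible triple are nonzero, and the admissibility of $(a_{j+1},a_j,i)$ is guaranteed by the defining conditions on $D_{a_1,\ldots,a_{2n-1}}^i$.

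I expect the main obstacle to be purely bookkeeping: verifying that after each collapse the next rung genuinely presents itself as a bubble in the correct orientation, so that the indices $\theta(a_{j+1},a_j,i)$ and $\Delta_{a_{j+1}}$ line up with the product ranges in the statement, and handling the two end caps consistently with the generic internal step. An induction on $n$ (equivalently, on the number of rungs) makes this precise and is the natural way to turn the picture into a rigorous argument.
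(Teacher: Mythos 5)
Your proposal is correct and follows essentially the same route as the paper: both compute the pairing by closing the two diagrams into a ladder network and collapsing it with iterated bubble identities, each collapse producing one factor $\delta_{a_{j+1}b_{j+1}}\theta(a_{j+1},a_j,i)/\Delta_{a_{j+1}}$ and the final circle giving $\Delta_{a_{2n-1}}$. The only difference is bookkeeping: the paper organizes the collapse as an induction on $n$ that peels one bubble from each end at a time (with the $n=2$ base case cited from \cite{C}), whereas you collapse rung-by-rung from a single end.
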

\begin{proof}
When $n=2$, by \cite[Lemma 4.5]{C} and direct computation,
\begin{eqnarray*}
&&<D_{a_1,a_2,a_3}^i,D_{b_1,b_2,b_3}^i>\notag\\
&=&\Delta_{a_3}\delta_{a_1b_1}\delta_{a_2b_2}\delta_{a_3b_3}\frac{\theta(a_2,a_1,i)}{\Delta_{a_2}}\frac{\theta(a_3,a_2,i)}{\Delta_{a_3}}.
\end{eqnarray*}
Thus the formula is true for $n=2$.
Now suppose the formula is true when $n=k-1$. Consider $n=k$. Then
\begin{eqnarray*}
&&< D_{a_1,...,a_{2n-1}}^i,D_{b_1,...,b_{2n-1}}^i > \notag \\
&=&\delta_{a_1b_1}\delta_{a_{2n-1}b_{2n-1}}\frac{\theta(a_2,a_1,i)}{\Delta_{a_2}}\frac{\theta(a_{2n-1},a_{2n-2},i)}{\Delta_{a_{2n-2}}}<D_{a_2,...,a_{2n-2}}^i,D_{b_2,...,b_{2n-2}}^i> \notag \\
&=&\delta_{a_1b_1}\delta_{a_{2n-1}b_{2n-1}}\frac{\theta(a_2,a_1,i)}{\Delta_{a_2}}\cdots\frac{\theta(a_{2n-1},a_{2n-2},i)}{\Delta_{a_{2n-2}}}\times \notag \\ &&\delta_{a_2b_2}...\delta_{a_{2n-2}b_{2n-2}}\frac{\theta(a_3,a_2,i)}{\Delta_{a_3}}\frac{\theta(a_{2n-2},a_{2n-3},i)}{\Delta_{a_{2n-2}}}\Delta_{a_{2n-2}} \notag \\
&=&\Delta_{a_{2n-1}}\delta_{a_1b_1}...\delta_{a_{2n-1}b_{2n-1}}\frac{\theta(a_2,a_1,i)}{\Delta_{a_2}}\cdots\frac{\theta(a_{2n-1},a_{2n-2},i)}{\Delta_{a_{2n-1}}}.
\end{eqnarray*}
Thus the formula holds for $n=k$.
Hence, by induction, the formula holds.
\end{proof}

\begin{cor}
\label{independent}
$D_{a_1,\cdots,a_{2n-1}}^i$'s are linearly independent.
\end{cor}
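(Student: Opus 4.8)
The plan is to read off linear independence directly from the norm formula of Lemma \ref{norm}. The essential observation is that the factor $\prod_{j=1}^{2n-1}\delta_{a_jb_j}$ appearing in that formula forces the inner product $\langle D^i_{a_1,\dots,a_{2n-1}}, D^i_{b_1,\dots,b_{2n-1}}\rangle$ to vanish whenever the two admissible sequences $(a_1,\dots,a_{2n-1})$ and $(b_1,\dots,b_{2n-1})$ differ in even a single entry. Thus the elements of $\FD(n,i)$ are pairwise orthogonal with respect to the bilinear form inherited from $TL_{ki}$, while by the closing sentence of Lemma \ref{norm} each of them has nonzero self-pairing.

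First I would record that, once the elements of $\FD(n,i)$ are listed in any fixed order, their Gram matrix with respect to $\langle\cdot,\cdot\rangle$ is diagonal, with the entry indexed by the tuple $(a)$ equal to the nonzero scalar $\Delta_{a_{2n-1}}\prod_{j=1}^{2n-2}\frac{\theta(a_{j+1},a_j,i)}{\Delta_{a_{j+1}}}$. A diagonal matrix over the field $\mathbb{Q}(A)$ with no zero on its diagonal is invertible, and a family of vectors whose Gram matrix is invertible is automatically linearly independent.

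Equivalently, and more concretely, suppose $\sum_{(a)} c_{(a)} D^i_{(a)} = 0$ is a finite linear relation with coefficients $c_{(a)}\in\mathbb{Q}(A)$, the sum ranging over distinct admissible tuples. Fixing one tuple $(b)$ and pairing both sides with $D^i_{(b)}$, bilinearity together with orthogonality collapses the left-hand side to the single surviving term $c_{(b)}\langle D^i_{(b)}, D^i_{(b)}\rangle$. Since this self-pairing is nonzero, $c_{(b)}=0$; as $(b)$ was arbitrary, every coefficient vanishes, which is the claim.

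I do not anticipate any genuine obstacle here: the corollary is a formal consequence of orthogonality plus nonvanishing of the self-pairings, and both ingredients are already supplied by Lemma \ref{norm}. The only point requiring care is the nonvanishing of the diagonal entries, which is precisely why the admissibility conditions (guaranteeing that each $\theta(a_{j+1},a_j,i)$ and each $\Delta_{a_{j+1}}$ is nonzero) were built into the definition of $\FD(n,i)$; for this I would simply cite the final remark of Lemma \ref{norm}.
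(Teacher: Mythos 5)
Your proposal is correct and is essentially the paper's own argument: the paper's proof of this corollary is literally the one-line citation ``This follows directly from Lemma \ref{norm},'' and what you have written out --- pairwise orthogonality from the $\prod_j \delta_{a_j b_j}$ factor, nonzero self-pairings, hence an invertible diagonal Gram matrix (equivalently, pairing any linear relation against a fixed $D^i_{(b)}$ kills all but the $(b)$ term) --- is precisely the standard argument that citation is invoking.
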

\begin{proof}
This follows directly from Lemma \ref{norm}.
\end{proof}

\begin{lem}
\label{identity}
The identity in $TL_{(k,i)}$ can be written as linear combination of $D_{a_1,\cdots,a_{2n-1}}^i$'s.
\end{lem}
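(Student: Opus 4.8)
\emph{The plan is to build the identity out of the caterpillar graphs $D^i_{\vec a}$ by repeatedly applying the basic fusion rule of recoupling theory.} Recall from \cite{kauffman94} that two parallel strands colored $a$ and $b$ may be rewritten as $\sum_{c}\frac{\Delta_c}{\theta(a,b,c)}\,\tau_c$, where the sum ranges over all $c$ with $(a,b,c)$ admissible and $\tau_c$ is the graph that fuses the two strands into a single $c$-colored edge and immediately splits it back into $a$ and $b$. This is the only nontrivial ingredient; everything else is bookkeeping of the resulting diagram shape and coefficients.

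First I would record the base cases. For $n=1$ the identity is a single $i$-colored strand, which is literally $D^i_{i}$, so there is nothing to prove. For $n=2$, applying the fusion rule to the two parallel $i$-strands of the identity produces exactly $\sum_{a_2}\frac{\Delta_{a_2}}{\theta(i,i,a_2)}D^i_{i,a_2,i}$, since $\tau_{a_2}$ is precisely the caterpillar of figure \ref{D} with middle spine color $a_2$, and the admissibility of $(i,i,a_2)$ matches condition (3) in the definition of $\FD(2,i)$. This already exhibits the identity of $TL_{(2,i)}$ as a linear combination of the $D$'s.

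For the inductive step I would induct on $n$. Regard the identity on $n$ strands as the identity on $n-1$ strands together with one extra $i$-strand inserted adjacent to the central region of the disk. By the inductive hypothesis the former is a linear combination of caterpillars $D^i_{\vec b}\in\FD(n-1,i)$; applying the fusion rule twice to absorb the new strand through the central spine edge of each $D^i_{\vec b}$ lengthens the spine by two trivalent vertices carrying the two new boundary legs, and the resulting graphs are exactly the elements of $\FD(n,i)$, each admissibility constraint being precisely condition (3). Tracking the two new factors shows the identity of $TL_{(n,i)}$ is expressed as
\[
1=\sum_{\vec a}\Big(\prod_{j=1}^{2n-2}\frac{\Delta_{a_{j+1}}}{\theta(a_{j+1},a_j,i)}\Big)D^i_{\vec a},
\]
which is the desired linear combination.

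The main obstacle is geometric rather than algebraic: one must verify that the iterated fusion produces precisely the comb-shaped spine of figure \ref{D} with the boundary legs attached in the correct cyclic order, that the admissibility relations forced at the new trivalent vertices are exactly those in the definition of $\FD(n,i)$, and that no differently-shaped or non-admissible graphs survive. A convenient check on the coefficients is that pairing the claimed expansion against a fixed $D^i_{\vec b}$ and invoking Lemma \ref{norm} must reproduce $\langle 1,D^i_{\vec b}\rangle$, the Kauffman bracket of the closure of $D^i_{\vec b}$; together with non-degeneracy of the form this both confirms the stated coefficients and shows the expansion is unique.
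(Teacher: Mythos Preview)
Your approach is the same as the paper's: both argue by induction on the number of strands, invoking the fusion identity from \cite{kauffman94} to absorb the additional $i$-strand into the caterpillar at each step. The paper applies the hypothesis to the first $n$ strands of the identity in $TL_{(n+1,i)}$ and then fuses the remaining strand; your version inserts the new strand near the central spine edge, which amounts to the same maneuver. (In fact a \emph{single} fusion of the new $i$-strand with the middle edge $a_n$ already yields a caterpillar of the correct shape, so ``twice'' is one more than necessary.)

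One correction worth noting: your displayed coefficient formula is not right. The identity expands only over \emph{palindromic} sequences $\vec a=(s_1,\dots,s_{n-1},s_n,s_{n-1},\dots,s_1)$, with coefficient $1/\eta(s)=\prod_{j=1}^{n-1}\Delta_{s_{j+1}}/\theta(s_{j+1},s_j,i)$; this appears later in the paper as $L_1=\sum_s G_{s,s}$. Your product $\prod_{j=1}^{2n-2}\Delta_{a_{j+1}}/\theta(a_{j+1},a_j,i)$ is nonzero on non-palindromic $\vec a$, and even on palindromic ones it equals $\Delta_i/(\eta(s)^2\Delta_{s_n})$ rather than $1/\eta(s)$ (already for $n=2$ it gives $\Delta_{a_2}\Delta_i/\theta(i,i,a_2)^2$ instead of $\Delta_{a_2}/\theta(i,i,a_2)$). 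The sanity check you yourself propose---pairing against a fixed $D^i_{\vec b}$ via Lemma~\ref{norm}---would have caught this, since $\langle 1,D^i_{\vec b}\rangle$ vanishes unless $\vec b$ is palindromic. None of this affects the validity of the argument for the lemma as stated, which only asserts that \emph{some} linear combination exists.
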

\begin{proof}
We prove the result by induction on k.
Consider the standard fusion identity from \cite{kauffman94} as given in figure \ref{fusion}.
\begin{figure}[ht]
\[
\begin{array}{c}
\includegraphics[scale=.7]{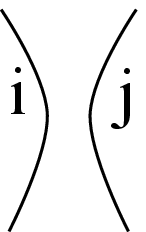}
\end{array}
= \sum_{k\ admissible} \frac{\Delta_{k}}{\theta(i,j,k)}
\begin{array}{c}
\includegraphics[scale=.8]{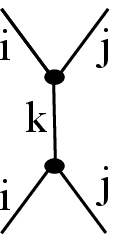}
\end{array}
\]
\caption{Fusion.}
\label{fusion}
\end{figure}

Notice that when $i=j$ one has the identity for $TL_{(2,i)}$. Thus the result holds when $k=2$.
Now suppose the result is true for $k=n$. Consider the case when $k=n+1$.
Just as $TL_{n}$ includes naturally into $TL_{n+1}$, $TL_{(n,i)}$ includes into $TL_{(n+1,i)}$. Thus apply the inductive hypothesis to the first $n$ strands of the identity element in $TL_{(n+1,i)}$, as indicated in the left hand side of \ref{induction}. Performing the fusion operation in each term on the right hand side of figure \ref{induction} leads to the conclusion.

 \begin{figure}[ht]
\[
\begin{array}{c}
\includegraphics{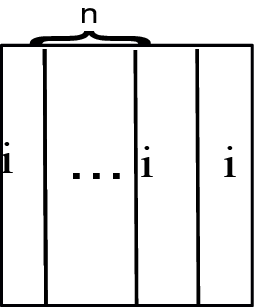}\hskip 0.1in
\end{array}
 =\sum_{ \bar{a}=(a_{1},\ldots, a_{2n-1})} C_{\bar{a}}
\begin{array}{c}
\includegraphics{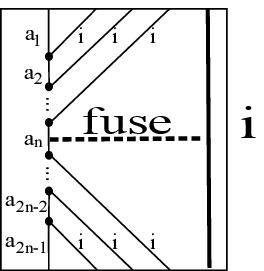}\hskip 0.1in
\end{array}
\]
\caption{Induction step.}
\label{induction}
\end{figure}
\end{proof}

\begin{lem}
\label{span}
Any element in $TL_{(n,i)}$  can be written as linear combination of $D_{a_1,\cdots,a_{2n-1}}^i$'s.
\end{lem}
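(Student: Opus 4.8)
The plan is to prove the lemma by showing that $V:=\mathrm{span}_{\mathbb{Q}(A)}\{D_{a_1,\ldots,a_{2n-1}}^i\}$ is a left ideal of $TL_{(n,i)}$ and then invoking Lemma \ref{identity}. Once $V$ is known to be closed under left multiplication by the algebra generators, the containment $1\in V$ supplied by Lemma \ref{identity} forces
$TL_{(n,i)}=TL_{(n,i)}\cdot 1\subseteq TL_{(n,i)}\cdot V\subseteq V$,
so that every element of $TL_{(n,i)}$ is a linear combination of the $D$'s. Since $TL_{(n,i)}$ is the image of $TL_{ni}$ under the homomorphism $C_i$, it is generated by the elements $C_i(e_1),\ldots,C_i(e_{ni-1})$; those $e_j$ whose turnback lies inside a single color-$i$ group are annihilated by the flanking Jones-Wenzl idempotents, so only the generators joining two adjacent colored edges survive, and each of these is supported in a small neighborhood of the boundary.

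The core of the argument is therefore the local claim that $C_i(e_j)\cdot D_{\bar a}^i$ again lies in $V$. Stacking such a generator on $D_{\bar a}^i$ alters the standard graph only near two adjacent boundary edges and the trivalent vertices of the spine between them. I would resolve this local configuration using the recoupling calculus of \cite{kauffman94}: apply the fusion identity of Figure \ref{fusion} to recreate admissible trivalent vertices, then use the recoupling ($6j$) move to re-route the internal tree back into the standard shape, and finally evaluate any resulting bubbles via $\theta(a,b,c)$ and $\Delta_n$. Each move expands the configuration into a finite linear combination of diagrams whose global shape is again that of a $D_{\bar b}^i$, with labels $\bar b$ differing from $\bar a$ only in the affected coordinates. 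Hence $C_i(e_j)\cdot D_{\bar a}^i\in V$, as required.

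The step I expect to be the \emph{main obstacle} is verifying that this local recoupling genuinely closes within the family: one must check that after the $6j$ move and the bubble evaluations the diagram returns to the standard linear trivalent shape of the graphs $D_{a_1,\ldots,a_{2n-1}}^i$ depicted in Figure \ref{D}, rather than to a more branched tree, and that the new labels $\bar b$ still satisfy the admissibility constraints $(b_k,b_{k-1},i)$. This is a finite, purely local computation governed by the admissibility of the trivalent vertices, but it is the point at which the precise form of the graphs $D_{a_1,\ldots,a_{2n-1}}^i$ must be used in detail.

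As an alternative, one could argue by induction on $n$ exactly as in Lemma \ref{identity}: embed $TL_{(n,i)}$ into $TL_{(n+1,i)}$, apply the inductive hypothesis to a general element on the first $n$ strands, and fuse the remaining strand, the only difference from Lemma \ref{identity} being that one starts from an arbitrary diagram rather than the identity. Either route reduces the lemma to the same local recoupling computation, and combined with Corollary \ref{independent} it shows that the $D_{a_1,\ldots,a_{2n-1}}^i$ form a basis.
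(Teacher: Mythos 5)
Your reduction (show that $V:=\mathrm{span}\{D^i_{\bar a}\}$ is a left ideal, then use $1\in V$) is sound in outline, but it rests on the claim that $TL_{(n,i)}$ is generated as a unital algebra by the elements $C_i(e_j)$, which you justify by appealing to $C_i$ being an algebra homomorphism. That is the step that fails, and it is a genuine gap. The product $C_i(x)C_i(y)$ contains an extra layer of Jones--Wenzl idempotents between $x$ and $y$ that $C_i(xy)$ lacks, and inserting that layer changes the skein element, so multiplicativity of $C_i$ (and with it ``the image is generated by the images of the $e_j$'') is not available. Concretely, take $n=2$, $i=2$ and write $V_c=D^2_{2,c,2}$. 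Resolving the two uncolored middle strands of $C_2(e_2)$ by $1=f_2+\frac{1}{\delta}e_1$ gives $C_2(e_2)=V_2+\frac{1}{\delta}V_0$, and the bubble relation (which gives $V_cV_{c'}=0$ for $c\neq c'$ and $V_c^2=\frac{\theta(2,2,c)}{\Delta_c}V_c$) yields $(C_2(e_2))^2=\frac{\theta(2,2,2)}{\Delta_2}V_2+\frac{\Delta_2}{\delta^2}V_0$, whereas $C_2(e_2^2)=\delta\, C_2(e_2)=\delta V_2+V_0$; these differ because $\Delta_2=\delta^2-1\neq\delta^2$. So the unital subalgebra generated by the $C_i(e_j)$ is not obviously all of $TL_{(n,i)}$; proving that it is (which does hold, but requires work) is essentially as hard as Lemma \ref{span} itself. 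For contrast, the analogous claim for the full $i$-colored turnbacks is false when $i\geq 2$: they generate a copy of $TL_n$ with loop value $\Delta_i$, of strictly smaller dimension. So even though you are quoting a statement the paper asserts (its theorem that $C_i$ is a homomorphism), your proof genuinely depends on it, while the paper's own proof of Lemma \ref{span} never uses it.

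The second incompleteness, which you flagged yourself, is that the core local claim $C_i(e_{mi})\cdot D^i_{\bar a}\in V$ is only sketched. It is true, and the painless way to prove it is not explicit $6j$ manipulation but a dimension count: the product agrees with $D^i_{\bar a}$ outside a disk meeting the skein in four colored points ($a_{2m-1},i,i,a_{2m+1}$), and the skein module of that disk is spanned by the standard one-internal-edge trees, so the product automatically re-expands as a combination of graphs $D^i_{\bar b}$ with only the intermediate label changed. This is exactly the mechanism of the paper's proof, deployed globally so that generators are never needed: the paper writes an arbitrary $T$ as $1\cdot T\cdot 1$, expands both copies of $1$ by Lemma \ref{identity} (figure \ref{DoubleFusion}), and observes that in each resulting term everything between the two middle edges is an element of the skein module $\mathcal{S}(I\times I,2)$ of a disk with two points colored $a_n$ and $b_n$, which is $0$- or $1$-dimensional; each term therefore collapses to a scalar times $D^i_{a_1,\ldots,a_n,b_{n-1},\ldots,b_1}$, and is zero unless $a_n=b_n$. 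Note also that your fallback, ``induct as in Lemma \ref{identity},'' does not parse for arbitrary elements: a general element of $TL_{(n+1,i)}$ is not of the form (element on the first $n$ colored strands) alongside a vertical strand, so there is nothing to which the inductive hypothesis applies. To salvage your route you would first need an independent proof of the generation claim; the paper's sandwich argument is the efficient fix.
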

\begin{proof}
Consider an arbitrary element $T \in TL_{(n,i)}$. 
In neighborhoods of the $n$ colored strands on either side of $T$ (in the dotted boxes) apply lemma \ref{identity}, as indicated on the left of figure \ref{DoubleFusion}.
 Now consider the dotted middle box on the right diagram in figure \ref{DoubleFusion}.
Consider it as an element in the skein module $\CS(I\times I,2)$ with $2$ colored points on the boundary.
It is well known that this skein module is either $0$ dimensional or $1$ dimensional, being 1 dimensional when the two points have the same color. In this case a single component labeled with the corresponding idempotent is the basis.
Therefore, one can write the element in the middle box to be a non-zero constant times a single component labeled with the idempotent $a_{n}$, when $a_{n}=b_{n}$, and zero otherwise.
Thus each element can be written as linear combination of $D_{a_1,\cdots,a_{2n-1}}$.

\begin{figure}[ht]
\[
\begin{array}{c}
\includegraphics{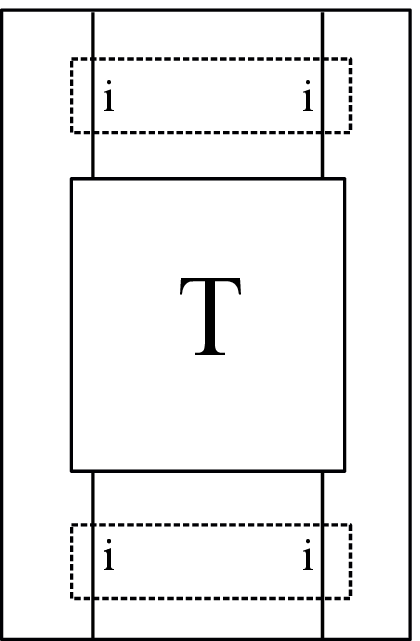}
\end{array}
= \sum_{\bar{a},\ \bar{b}} C_{\bar{a}, \bar{b}}
\begin{array}{c}
\includegraphics{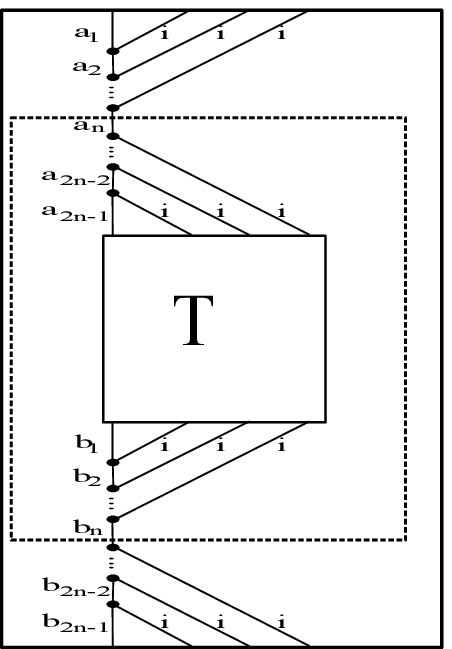}
\end{array}
\]
\caption{Linear combination.}
\label{DoubleFusion}
\end{figure}

\end{proof}

By Corollary \ref{independent} and Lemma \ref{span}, the collection $\{D_{a_1,\cdots,a_{2n-1}}^i\}$ is linearly independent and spans $TL_{(n,i)}$. 

\begin{thm}
$\{D_{a_1,\cdots,a_{2n-1}}^i\}$ is a basis of $TL_{(n,i)}$.
\end{thm}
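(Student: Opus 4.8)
The plan is to assemble the theorem directly from the two results immediately preceding it, since together they already supply both halves of the definition of a basis. First I would invoke Corollary~\ref{independent}, which establishes that the collection $\{D_{a_1,\cdots,a_{2n-1}}^i\}$ is linearly independent; this in turn rests on the explicit Gram-matrix computation of Lemma~\ref{norm}, where the pairings are shown to be diagonal with nonzero diagonal entries $\Delta_{a_{2n-1}}\prod_{j=1}^{2n-2}\theta(a_{j+1},a_j,i)/\Delta_{a_{j+1}}$. Second I would invoke Lemma~\ref{span}, which shows that every element of $TL_{(n,i)}$ is a linear combination of these same elements. A subset of a module that is simultaneously linearly independent and spanning is, by definition, a basis, so the theorem follows at once.

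The only points requiring care are bookkeeping ones, and I would address them before declaring the proof complete. I would check that the indexing set over which independence is proven coincides exactly with the indexing set over which spanning is proven, namely the tuples $(a_1,\ldots,a_{2n-1})$ subject to the three admissibility conditions $a_1=a_{2n-1}=i$, $a_k\in\mathbb{N}$, and $(a_k,a_{k-1},i)$ admissible for all $k$. (The superscript $i$ is suppressed in the statement of Lemma~\ref{span}, but it denotes the same family.) Because the spanning argument of Lemma~\ref{span} produces nonzero coefficients only for admissible tuples---the skein-module dimension count forces the middle box to vanish whenever the two colors fail to match---the spanning set is contained in the independent set, and hence equal to it, so there is no mismatch to reconcile.

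In truth there is no genuine obstacle remaining at this stage: the substantive work has already been carried out in the lemmas, namely the fusion expansion of the identity in Lemma~\ref{identity}, its propagation to an arbitrary element in Lemma~\ref{span}, and the orthogonality computation in Lemma~\ref{norm}. If one wanted to record a finiteness remark, I would observe that the successive admissibility constraints $|a_{k-1}-i|\leq a_k\leq a_{k-1}+i$ confine each label to finitely many values, so that $\FD(n,i)$ is a finite set and $TL_{(n,i)}$ is finite-dimensional; but this is not needed for the statement as phrased.
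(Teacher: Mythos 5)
Your proposal is correct and follows exactly the paper's own argument: the theorem is deduced immediately by combining Corollary~\ref{independent} (linear independence, via the Gram computation of Lemma~\ref{norm}) with Lemma~\ref{span} (spanning). The additional bookkeeping you note about matching index sets is sound but not something the paper spells out.
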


\begin{rem}
P. Gilmer has also proved that $\{D_{a_1,\cdots,a_{2n-1}}^i\}$ is a basis in \cite{G}. There he used the skein module interpretation and decomposed any skein element in $TL_{(n,i)}$ into unions of $3$-balls with $3$ colored points on the boundary.
\end{rem}


\section{Cellularity of the graph basis}
\label{cellularity}

Having constructed a basis for the $k^{th}$ $i$-colored Temperley-Lieb algebra we will now show that this basis is cellular as defined in \cite{mathas2008}. 

\begin{de} A \textit{cell datum} for an unital R-algebra algebra $A$ is a triple $ (\Lambda, T, C)$ where $\Lambda= ( \Lambda, >)$ is a finite poset, $T(\lambda)$ is a finite set for each $\lambda \in \Lambda$ and
$$ C: \coprod_{\lambda \in \Lambda} T(\lambda) \times T(\lambda) \rightarrow A; (s,t) \mapsto a^{\lambda}_{st}$$

is an invective map of sets such that: 
\begin{itemize}
\item $\{ a_{st}^{\lambda} | \lambda \in \Lambda; s,t \in T(\lambda)\}$ is an R-free basis of A
\item For any $x \in A$ and $t \in T( \lambda)$ there exist scalars $r_{tvx} \in R$ such that for any $s \in T(\lambda),$
$$a^{\lambda}_{st}x=\sum_{v \in T(\lambda)} r_{tvx} a_{sv}^{\lambda} (mod A^{\lambda})$$

where $A^{\lambda}$ is the $R$-submodule of A with basis 
$\{a^{\mu}_{yz}| \mu > \lambda\ and\ y,z \in T(\mu)\}$

\item The $R$-linear map determined by 
$\ast:A\rightarrow A; a^{\lambda}_{st}=a^{\lambda}_{ts}$, for all $\lambda \in \Lambda$ and $s,t \in T(\lambda)$, is an anti-isomorphism of A.
\end{itemize}
\end{de} 

\begin{de}
A sequence $s=(s_{1},\ldots,s_{k})$ $i$-admissible if
\begin{enumerate}
\item $s_{1}=i$ 
\item $(s_{j-1},s_{j},i)$ is an admissible triple for $1<j\leq k$
\end{enumerate}

Furthermore we say the weight of $s$, $\omega(s)$, is defined by $\omega(s)=s_{k}$.
\end{de}

\begin{lem}
\label{weights}
	For $s=(s_{1},\ldots,s_{k})$, an $i$-admissible sequence of length $k$, 
	$\omega(s) \in \{ki-2j | j=0,\ldots, \lfloor \frac{ki}{2} \rfloor\}$ and furthermore, all such weights are achieved by an admissible sequence.
\end{lem}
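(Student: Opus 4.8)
The statement has two halves, and I would prove them by quite different but elementary inductions. The first half (every weight lies in the set) I would get from two short telescoping inductions on the position $j$. Recall that admissibility of $(s_{j-1},s_j,i)$ forces $s_{j-1}+s_j+i$ to be even, so $s_j\equiv s_{j-1}+i\pmod 2$; with the base $s_1=i$ this gives $s_j\equiv ji\pmod 2$ and in particular $\omega(s)=s_k\equiv ki\pmod 2$. Separately, the triangle inequality $s_j\le s_{j-1}+i$ (one of the admissibility conditions) telescopes to $s_k\le s_1+(k-1)i=ki$. Since every term is a non-negative integer, $s_k\ge 0$, and so $s_k$ is a non-negative integer at most $ki$ of the same parity as $ki$; this is precisely the set $\{ki-2j\mid 0\le j\le\lfloor ki/2\rfloor\}$.

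For the second half (every element of the set is achieved) the plan is to argue by induction on $k$ that the set of achievable weights is exactly this full set. The base case $k=2$ is a direct check: the triples $(i,s_2,i)$ are admissible exactly for $s_2\in\{0,2,\ldots,2i\}$, which is the claimed set. For the inductive step I would fix a target $w$ with $0\le w\le ki$ and $w\equiv ki\pmod 2$, and produce a predecessor $u$ that is achievable as a length-$(k-1)$ weight and satisfies that $(u,w,i)$ is admissible; appending $w$ then realizes $w$. The constraints on $u$ are $0\le u\le (k-1)i$ with $u\equiv (k-1)i\pmod 2$ (so that $u$ is a valid length-$(k-1)$ weight by the inductive hypothesis), together with $|w-i|\le u\le w+i$ and $u+w+i$ even (admissibility of $(u,w,i)$). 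The two parity requirements are compatible because $w\equiv ki$ forces $w+i\equiv (k-1)i\pmod 2$, so the task reduces to finding an integer of the correct parity in the interval $[\max(0,|w-i|),\ \min((k-1)i,\ w+i)]$.

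The main obstacle, and essentially the only content of the step, is verifying that this interval is non-empty and meets the correct parity class. Non-emptiness I would check by showing both $(k-1)i$ and $w+i$ dominate each of $0$, $w-i$, $i-w$; each comparison reduces to an elementary inequality such as $ki\ge w$ or $(k-2)i+w\ge 0$, all clear for $k\ge 2$. For the parity, I would note that $w-i$, $i-w$, $w+i$ and $(k-1)i$ all carry the parity of $(k-1)i$, so the only endpoint that could have the wrong parity is the $0$ entering the lower bound, and only when $(k-1)i$ is odd; but that case forces $w=i$, where the interval has length at least $1$ and hence still contains a correct-parity integer. This closes the induction. (For $k=1$ the only admissible sequence is $(i)$, so the surjectivity assertion should be read with $k\ge 2$.)
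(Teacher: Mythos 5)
Your proof is correct, and it is tighter than the paper's in exactly the place where the paper's argument is thin. The paper proves both halves by a single induction on $k$: assuming $s_{k}$ ranges over $\{ki-2j\}$, it asserts that $(s_{k},s_{k+1},i)$ is admissible \emph{if and only if} $0\leq s_{k+1}\leq (k+1)i$ and $s_{k+1}\equiv (k+1)i \pmod 2$. That ``if and only if'' hides the real content: for a \emph{fixed} $s_{k}$ the admissible values of $s_{k+1}$ only range over $[\,|s_{k}-i|,\,s_{k}+i\,]$, so to realize every weight at level $k+1$ one must vary the predecessor $s_{k}$ over the level-$k$ set and know, inductively, that each such predecessor is itself achieved. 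Your decomposition supplies exactly this: membership follows from the telescoped parity and triangle inequalities, and achievability from the explicit construction of an achievable predecessor $u$ of the correct parity in $[\max(0,|w-i|),\,\min((k-1)i,\,w+i)]$. You also correctly flag that the surjectivity assertion fails at $k=1$ when $i\geq 2$ (the only admissible sequence is $(i)$), a case the paper dismisses as ``clear.'' Two small simplifications to your step: since $|w-i|\geq 0$, the lower bound of your interval is always $|w-i|$ itself, and $|w-i|\equiv w+i\equiv (k-1)i \pmod 2$, so you may simply take $u=|w-i|$ and drop the case analysis about the parity of $0$; indeed the worrying case ``$(k-1)i$ odd and $w=i$'' cannot occur, because $w=i$ together with $w\equiv ki\pmod 2$ forces $(k-1)i$ to be even.
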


\begin{proof}
Clearly the statement holds when $k=1$. Suppose now the statement holds for all $i$-admissible sequences of length k. Suppose that $s=(s_{1},\ldots,s_{k+1})$ is $i$-admissible. In particular  $(s_{k},s_{k+1},i)$ is an admissible triple. By our inductive hypothesis $s_{k} \in \{ki-2j | j=0,\ldots, \lfloor \frac{ki}{2} \rfloor\}$. Applying the definition of an admissible triple we find that $$0 \leq s_{k+1} \leq (k+1)i-2j$$ and that $(k+1)i-2j+s_{k+1}$ is even for each $ j=0\ldots \lfloor \frac{ki}{2}\rfloor$. In particular we see that $(s_{k},s_{k+1},i)$ is admissible if and only if

\begin{enumerate}
\item $0 \leq s_{k+1} \leq (k+1)i$ and
\item $s_{k+1}$ and $(k+1)i$ have the same parity.
\end{enumerate}

Thus $s_{k+1} \in \{(k+1)i-2j | j=0,\ldots, \lfloor \frac{(k+1)i}{2}\rfloor\}$

\end{proof}

\begin{de}
	For an $i$-admissible sequence $s=(s_{1}, \ldots, s_{k})$ define
\begin{equation}
\eta(s)=\prod_{j=1}^{k-1}\frac{\theta(s_{j+1},s_j,i)}{\Delta_{s_{j+1}}}.
\notag
\end{equation}
\end{de}

It is clear that for an $i$-admissible sequence $s$, $\eta(s)$ is non-zero.

\begin{de}
\label{CDatum1}
	Let 
	$\Lambda_{k,i}=\{ki-2j | j=0,\ldots, \lfloor \frac{ki}{2} \rfloor\}$ with the natural order. For $\lambda \in \Lambda_{k,i}$ let $T(\lambda)$ be the collection of all $i$-admissible sequences of length $k$ with weight $\lambda$. Consider the following map
	$$ C : \prod_{ \lambda \in \Lambda_{k,i}} T(\lambda) \times T(\lambda) \rightarrow TL_{(k,i)}$$

as follows; say $ s=(s_{1}, \ldots,s_{k}=\lambda)$ and $t=(t_{1},\ldots,t_{k}=\lambda)$
$$(s,t) \mapsto  \frac{1}{\eta(s)}* D^{i}_{s_{1}, \ldots,s_{k}=\lambda=t_{k},t_{k-1}, \ldots, t_{1}}$$

\

\end{de}
	 Letting $G_{s,t}$ denote $C(s,t)$ (understanding that $s$ and $t$ must be $i$-admissible sequences of the same length and weight),  $\{G_{s,t}| s, t \in T(\lambda), \lambda \in \Lambda_{i,k}\}$ is a  basis for the $k^{th}$ $i$-colored Temperley-Lieb algebra $TL_{(k,i)}$. 
	 
\begin{lem}
\label{GBProd}
$$G_{s,t}\cdot G_{u,v}=
 \begin{cases}
 G_{s,v} & \text{if}\ t=u \\
 0 & \text{otherwise}
 \end{cases}$$
 \end{lem}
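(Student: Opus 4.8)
The plan is to interpret each $G_{s,t}$ diagrammatically and then compute the product by stacking graphs and reducing the resulting middle region with the standard recoupling (bubble) identities from \cite{kauffman94}. First I would record that, up to the scalar $1/\eta(s)$, the element $D^{i}_{s_1,\ldots,s_k=\lambda=t_k,\ldots,t_1}$ underlying $G_{s,t}$ is the graph that fuses the $k$ bottom $i$-colored strands into a single $\lambda$-colored strand along the comb dictated by the running colors $s_1=i,s_2,\ldots,s_k=\lambda$, and then splits that $\lambda$-strand back into $k$ top $i$-colored strands along the comb dictated by $t_k=\lambda,\ldots,t_1=i$. Thus, writing $\Phi(s,t)$ for this ``fuse--then--split'' graph, we have $G_{s,t}=\frac{1}{\eta(s)}\,\Phi(s,t)$; the admissibility conditions on $s$ and $t$ are exactly those guaranteeing that every trivalent vertex appearing is admissible.

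Second, I would form the product $G_{s,t}\cdot G_{u,v}=\frac{1}{\eta(s)\eta(u)}\Phi(s,t)\Phi(u,v)$ by placing $G_{s,t}$ below $G_{u,v}$, so that the split comb of $t$ is glued leg-to-leg to the fuse comb of $u$ along the $k$ interior $i$-colored strands. The heart of the argument is to evaluate this middle region. I would peel bubbles starting from the innermost $i$-legs: the first pair of trivalent vertices, carrying colors $(t_2,i,i)$ and $(i,i,u_2)$, form a bubble whose reduction yields the factor $\delta_{t_2,u_2}\,\theta(t_2,i,i)/\Delta_{t_2}$ together with a single $t_2$-colored strand. This new strand then forms a fresh bubble with the next pair of vertices, and inducting on $k$ one sees that the $j$-th reduction contributes $\delta_{t_{j+1},u_{j+1}}\,\theta(t_{j+1},t_j,i)/\Delta_{t_{j+1}}$ for $j=1,\ldots,k-1$. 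Multiplying these and using the symmetry of $\theta$ gives exactly
\[
\Phi(s,t)\,\Phi(u,v)=\Big(\prod_{j=2}^{k}\delta_{t_j,u_j}\Big)\,\eta(t)\,\Phi(s,v),
\]
since after all bubbles are removed the leftover graph is the fuse comb of $s$ followed by the split comb of $v$, that is $\Phi(s,v)$.

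Finally I would assemble the scalars. If $t=u$ then $\prod_j\delta_{t_j,u_j}=1$ and $\eta(t)=\eta(u)$, so $G_{s,t}G_{u,v}=\frac{\eta(u)}{\eta(s)\eta(u)}\Phi(s,v)=\frac{1}{\eta(s)}\Phi(s,v)=G_{s,v}$; the asymmetric normalization by $1/\eta(s)$ is precisely what makes the bubble factor $\eta(u)$ cancel. If $t\neq u$ then some $t_j\neq u_j$ with $j\ge 2$, the corresponding Kronecker delta vanishes, and the product is $0$.

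The step I expect to be the main obstacle is the exact bookkeeping in the bubble reduction: one must check both that the $\theta/\Delta$ factors telescope to give precisely $\eta(t)$ (this is where the definition of $\eta$ and the symmetry of $\theta$ are used) and that a mismatch in \emph{any} intermediate color---not merely in the top weight $\lambda=s_k$ versus $\mu=u_k$---forces the product to vanish. The latter is the conceptually important point: the naive ``two boundary points of different color give the zero skein module'' argument handles only $\lambda\neq\mu$, whereas the full orthogonality is produced by the Kronecker deltas appearing at every rung of the ladder. A clean way to discharge both points at once is to prove the displayed identity for $\Phi(s,t)\Phi(u,v)$ by induction on $k$, with the base case $k=1$ trivial and $k=2$ reducing to a single bubble.
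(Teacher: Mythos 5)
Your proposal is correct and follows essentially the same route as the paper: the paper's proof simply stacks the two diagrams and ``repeatedly applies the bubble relation'' from \cite{kauffman94}, which is exactly your bubble-peeling induction, with your explicit bookkeeping (the Kronecker deltas $\delta_{t_j,u_j}$ at every rung and the telescoping $\theta/\Delta$ factors assembling to $\eta(t)$, cancelled by the asymmetric normalization $1/\eta(u)$) filling in the details the paper leaves to the reader.
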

	
\begin{proof}

\begin{figure}[!ht]
\[G_{s,t}\cdot G_{u,v}=
\frac{1}{\eta(s)} 
\begin{array}{c} \includegraphics[scale=.5]{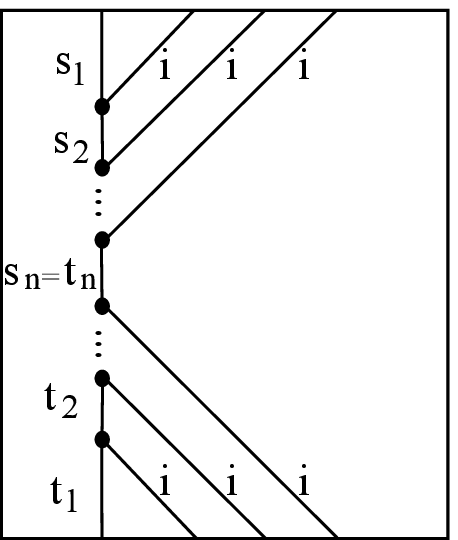}
\end{array}
\cdot \frac{1}{\eta(u)}
\begin{array}{c}
\includegraphics[scale=.5]{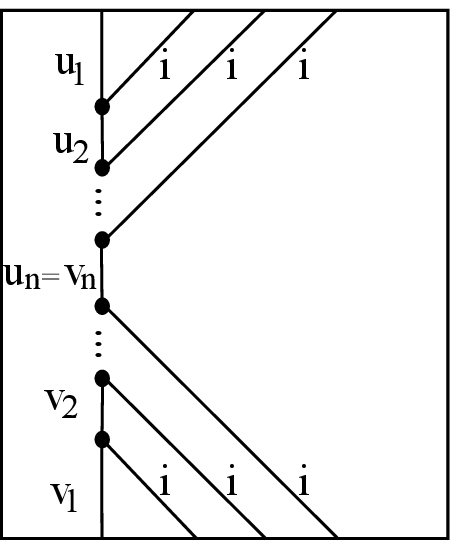}
\end{array} 
=\frac{1}{\eta(s)} \frac{1}{\eta(u)}
\begin{array}{c}
\includegraphics[scale=.5]{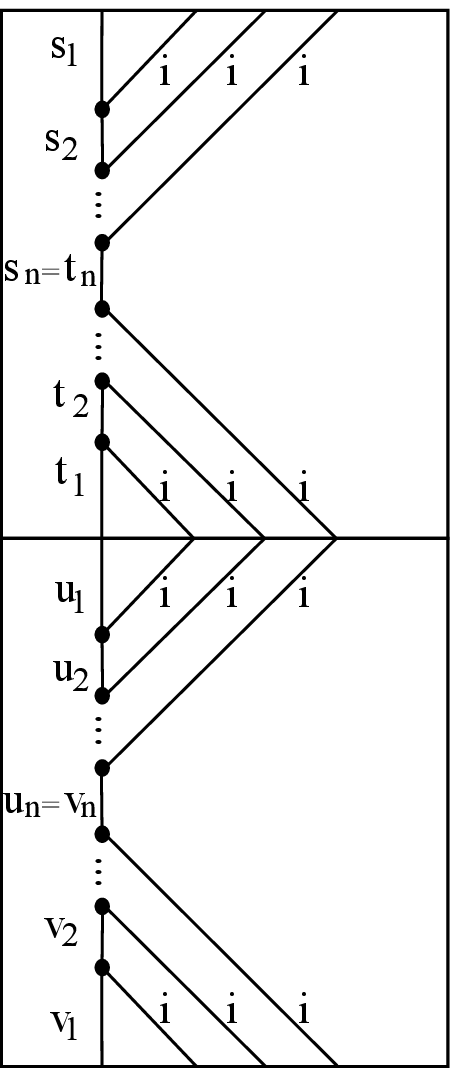}
\end{array}
\]

\caption{The product of two elements of the graph basis}
\label{GBProdComp}
\end{figure}
Let $s=(s_{1},\ldots s_{n}),\ t=(t_{1}, \ldots, t_{n}),\ u=(u_{1}, \ldots, u_{n}),$ and $v=(v_{1}, \ldots, v_{n})$.
After repeatedly applying the bubble relation from \cite{kauffman94} to the right-most diagram in figure \ref{GBProdComp} the result follows.

\end{proof}

\begin{prop}
\label{CDatum2}
Say that $ x \in TL_{(k,i)}$ and $t \in T(\lambda^{t})$. Then there are scalars $r_{tvx} \in \mathbb{Q}(A)$ such that for any $s \in T(\lambda^{t})$
$$G_{s,t}x=\sum_{v \in T(\lambda^{t})} r_{tvx}G_{s,v}$$
\end{prop}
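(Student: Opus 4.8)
The plan is to reduce the statement to a single application of the matrix-unit multiplication rule of Lemma \ref{GBProd}. The point of that lemma is that the graph basis behaves exactly like a system of matrix units within each weight class, so once $x$ is written out in the basis, multiplying on the left by $G_{s,t}$ simply selects one ``row'' of coefficients. Concretely, I would first fix $t \in T(\lambda^{t})$, write $\lambda^{t}=\omega(t)$, and expand the arbitrary element $x$ in the graph basis:
\[
x=\sum_{\mu \in \Lambda_{k,i}}\ \sum_{u,v \in T(\mu)} c^{\mu}_{uv}\, G_{u,v},
\qquad c^{\mu}_{uv}\in \mathbb{Q}(A).
\]
This is legitimate because, as noted immediately after Definition \ref{CDatum1}, the family $\{G_{u,v}\mid \mu\in\Lambda_{k,i},\ u,v\in T(\mu)\}$ is a basis of $TL_{(k,i)}$.

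Next, for any $s\in T(\lambda^{t})$ I would compute $G_{s,t}\,x$ using bilinearity and then apply Lemma \ref{GBProd} term by term:
\[
G_{s,t}\,x=\sum_{\mu}\ \sum_{u,v\in T(\mu)} c^{\mu}_{uv}\, G_{s,t}\,G_{u,v}.
\]
By Lemma \ref{GBProd}, each product $G_{s,t}\,G_{u,v}$ vanishes unless $t=u$. In particular, whenever $\mu\neq\lambda^{t}$ we have $t\neq u$ as sequences (they have different weights), so those terms fall into the ``$0$ otherwise'' case and contribute nothing; and when $\mu=\lambda^{t}$ the product still vanishes unless $u=t$, in which case $G_{s,t}\,G_{t,v}=G_{s,v}$. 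Hence the only surviving contributions come from $\mu=\lambda^{t}$ and $u=t$, giving
\[
G_{s,t}\,x=\sum_{v\in T(\lambda^{t})} c^{\lambda^{t}}_{tv}\, G_{s,v}.
\]
Setting $r_{tvx}=c^{\lambda^{t}}_{tv}$ then yields the asserted formula.

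I do not expect a genuine obstacle here, since the computation is essentially forced by Lemma \ref{GBProd}; the one point that must be handled with care is the claim, built into the statement of the cellular axiom, that the scalars $r_{tvx}$ are independent of $s$. This independence is exactly what the matrix-unit rule provides: $r_{tvx}$ is nothing but the coefficient of $G_{t,v}$ in the expansion of $x$, which was fixed before $s$ was ever introduced, so it cannot depend on $s$. The only bookkeeping to keep straight is that all cross-weight products $G_{s,t}G_{u,v}$ with $\omega(u)\neq\lambda^{t}$ are absorbed into the vanishing case of Lemma \ref{GBProd}; once this is checked, the stronger ``on the nose'' equality (with no $\bmod A^{\lambda}$ error term) follows, reflecting that the graph basis realizes $TL_{(k,i)}$ as a direct sum of matrix algebras.
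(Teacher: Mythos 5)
Your proof is correct and follows essentially the same route as the paper's: expand $x$ in the graph basis, apply the matrix-unit rule of Lemma \ref{GBProd} term by term, and read off $r_{tvx}$ as the coefficient of $G_{t,v}$ in $x$, which is manifestly independent of $s$. Your extra remarks --- that cross-weight products vanish under the same lemma and that the identity holds on the nose rather than modulo $A^{\lambda}$ --- are accurate and merely make explicit what the paper leaves implicit.
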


\begin{proof}
Write $x$ as a linear combination of the graph basis elements,
$$x=\sum_{\lambda \in \Lambda_{k,i}}(\sum_{u,v \in T(\lambda)} p^{\lambda}_{u,v}G_{u,v})$$
where each $p_{u,v}^{\lambda} \in \mathbb{Q}(A)$. By Lemma \ref{GBProd} it follows that
$$ G_{s,t} \cdot x= \sum_{v \in T(\lambda^{t})} p^{\lambda^{t}}_{t,v} G_{s,t}\cdot G_{t,v}=\sum_{v \in T(\lambda^{t})} p^{\lambda^{t}}_{t,v} G_{s,v}$$
The lemma follows letting $r_{tvx}=p^{\lambda^{t}}_{t,v}$.
\end{proof}



\begin{prop}
\label{CDatum3}
	Consider the $\mathbb{Q}(A)$-linear map determined by $*:TL_{(k,i)} \rightarrow TL_{(k,i)}$ defined by $(G_{s,t})^{*}=G_{t,s}$. This map is an anti-isomorphism of $TL_{(k,i)}$
\end{prop}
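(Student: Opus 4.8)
The plan is to verify that the map $*$ is a well-defined $\mathbb{Q}(A)$-linear anti-isomorphism of $TL_{(k,i)}$; that is, it is bijective, $\mathbb{Q}(A)$-linear, order-two (an involution), and satisfies $(xy)^{*}=y^{*}x^{*}$. Since $*$ is defined on the basis $\{G_{s,t}\}$ by $(G_{s,t})^{*}=G_{t,s}$ and extended linearly, $\mathbb{Q}(A)$-linearity is immediate, and well-definedness follows because $(s,t)\mapsto G_{s,t}$ ranges over a basis (Theorem of Section \ref{GB} together with Definition \ref{CDatum1}). The map $*$ visibly permutes the basis by swapping $s$ and $t$, so it is bijective; moreover applying it twice returns $(G_{t,s})^{*}=G_{s,t}$, so $*$ is an involution. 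The only substantive point is the anti-multiplicativity, and by bilinearity it suffices to check it on pairs of basis elements, i.e. to show $(G_{s,t}\,G_{u,v})^{*}=(G_{u,v})^{*}(G_{s,t})^{*}=G_{v,u}\,G_{t,s}$.

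First I would dispatch the multiplicative identity using the multiplication rule already established in Lemma \ref{GBProd}. On the left side, $G_{s,t}G_{u,v}=\delta_{t,u}\,G_{s,v}$, so its image under $*$ is $\delta_{t,u}\,G_{v,s}$. On the right side, $G_{v,u}G_{t,s}=\delta_{u,t}\,G_{v,s}$, where the Kronecker delta condition $u=t$ is exactly the same condition $t=u$ appearing on the left. Comparing the two, both equal $\delta_{t,u}\,G_{v,s}$, so the identity $(G_{s,t}G_{u,v})^{*}=(G_{u,v})^{*}(G_{s,t})^{*}$ holds on all basis pairs. Extending bilinearly over $\mathbb{Q}(A)$ gives $(xy)^{*}=y^{*}x^{*}$ for all $x,y\in TL_{(k,i)}$.

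The genuinely delicate step, and the one I would flag as the main obstacle, is confirming that $*$ is actually \emph{well-defined} at the diagrammatic level and is consistent with the normalization in Definition \ref{CDatum1}: there $G_{s,t}$ is defined as $\frac{1}{\eta(s)}*D^{i}_{s_{1},\ldots,\lambda,\ldots,t_{1}}$, using the $*$ operation already built into the definition. I would want to check that the $*$ here agrees with (or is compatible with) the ambient involution on the skein algebra $TL_{ki}$ given by reflecting diagrams across the horizontal axis (swapping the top and bottom boundary points), since this geometric reflection is what naturally interchanges the $s$-half and $t$-half of the trivalent graph $D^{i}$ and reverses the order of composition. Once the reflection interpretation is in hand, anti-multiplicativity also follows conceptually: stacking $L$ above $F$ and reflecting reverses the stacking order, yielding $F^{*}$ above $L^{*}$. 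I would therefore organize the proof so that the combinatorial verification via Lemma \ref{GBProd} is the formal argument, while remarking that the geometric reflection picture both motivates the definition and guarantees the $\eta(s)$ normalizations match under the swap $s\leftrightarrow t$, so that no spurious scalar obstructs the involution property.
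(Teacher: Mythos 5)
Your proof is correct and follows essentially the same route as the paper: anti-multiplicativity is checked on pairs of basis elements using the multiplication rule of Lemma \ref{GBProd} (you additionally handle the zero-product case $t\neq u$ explicitly, where both sides vanish, which the paper dismisses with ``it suffices to consider non-zero products''), and since $*$ permutes the basis it is bijective, hence an anti-isomorphism.

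Two corrections to your side remarks, neither of which affects the validity of your argument. First, the symbol $*$ appearing in Definition \ref{CDatum1} is merely scalar multiplication by $\frac{1}{\eta(s)}$, not the involution, so there is no circularity or hidden compatibility to verify there. Second, your hoped-for guarantee that ``the $\eta(s)$ normalizations match under the swap $s\leftrightarrow t$'' is actually false: reflecting $G_{s,t}=\frac{1}{\eta(s)}D^{i}_{s_{1},\ldots,\lambda,\ldots,t_{1}}$ in a horizontal line produces $\frac{1}{\eta(s)}D^{i}_{t_{1},\ldots,\lambda,\ldots,s_{1}}=\frac{\eta(t)}{\eta(s)}G_{t,s}$, so the algebraically defined $*$ agrees with the diagrammatic reflection only up to the diagonal rescaling by $\eta(t)/\eta(s)$ (both are anti-automorphisms, but they are not equal; the paper's remark following the proposition is imprecise on this point as well). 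Since the proposition defines $*$ abstractly on the basis, no diagrammatic well-definedness check is required, and your combinatorial verification via Lemma \ref{GBProd} stands on its own as a complete proof.
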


\begin{proof}
It suffices to consider only the case of a non-zero product of basis elements. Thus consider 
$$(G_{s,t} \cdot G_{t,v})^{*}=
(G_{s,v})^{*}=G_{v,s}=G_{v,t}\cdot G_{t,s}=(G_{t,v})^{*} \cdot (G_{s,t})^{*}$$

Thus $*$ is an anti-homomorphism. It follows that $*$ is an isomorphism as it is a bijection from the set of basis elements to itself.
\end{proof}

\begin{rem}
	Note that this anti-isomorphism is the extension of the standard anti-isomorphism in the Temperley-Lieb algebra given by reflecting an element in a horizontal line. 
\end{rem}

\begin{thm}
	The $k^{th}$ $i$-colored Temperley-Lieb algebra $TL_{(k,i)}$ is a cellular algebra with cell datum $(\Lambda_{k,i}, T, C)$
\label{CA}
\end{thm}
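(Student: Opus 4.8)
The plan is to verify the three axioms in the definition of a cell datum one at a time, assembling the ingredients already in hand. First I would check the underlying combinatorial data. By Lemma \ref{weights} the set $\Lambda_{k,i}$ is exactly the set of weights achieved by $i$-admissible sequences of length $k$; it is a finite set of integers, and the natural order makes it a finite poset. For each $\lambda$, the set $T(\lambda)$ of $i$-admissible sequences of length $k$ and weight $\lambda$ is finite, since every entry is a nonnegative integer bounded above by $ki$. Finally I would confirm that $C$ is injective: an ordered pair $(s,t)$ of same-weight sequences is recorded as the single index sequence $(s_1,\ldots,s_k=t_k,\ldots,t_1)$ of length $2k-1$ attached to the element $D$, and this sequence determines $(s,t)$ uniquely by splitting at its middle entry. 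Since the rescaling factor $1/\eta(s)$ is nonzero, distinct pairs map to distinct nonzero elements.

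Next I would establish the free-basis axiom. By Corollary \ref{independent} and Lemma \ref{span}, the family $\{D^i_{a_1,\ldots,a_{2k-1}}\}$ is a $\mathbb{Q}(A)$-basis of $TL_{(k,i)}$, where the admissible index sequences are in bijection with the ordered pairs of same-weight $i$-admissible sequences of length $k$. Because $C$ rescales each such $D$ by the nonzero scalar $1/\eta(s)$ and reindexes it as $G_{s,t}$, the family $\{G_{s,t}\}$ is again a $\mathbb{Q}(A)$-free basis.

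It then remains to record the last two axioms, which are exactly the content of the preceding propositions. The multiplicative axiom is Proposition \ref{CDatum2}: for any $x$ and any $t \in T(\lambda)$ one has $G_{s,t}x = \sum_{v} r_{tvx} G_{s,v}$, with the scalars $r_{tvx}$ independent of $s$. Here I would note that Proposition \ref{CDatum2} in fact proves the stronger \emph{exact} identity (with no $A^\lambda$-error term), which holds because Lemma \ref{GBProd} endows the graph basis with the matrix-unit multiplication $G_{s,t}G_{u,v} = \delta_{t,u}G_{s,v}$; an exact identity a fortiori holds modulo $A^\lambda$, so the axiom as stated follows. The involution axiom is Proposition \ref{CDatum3}, which shows that the $\mathbb{Q}(A)$-linear map determined on the basis by $(G_{s,t})^* = G_{t,s}$ is an anti-automorphism. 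Collecting these three verifications produces the cell datum $(\Lambda_{k,i},T,C)$ and hence the theorem.

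The proof of the theorem itself is thus largely bookkeeping, since the substantive work is already carried by Lemma \ref{GBProd}. The only point demanding a little care is the interplay between the general definition, which permits a correction term in $A^\lambda$, and our situation, where Lemma \ref{GBProd} forces the product to land exactly in the span of the $G_{s,v}$. I would emphasize that this exactness is not a contradiction but a particularly clean (degenerate) instance of the cellular axiom, reflecting the fact that the graph basis already exhibits the semisimple, matrix-algebra structure of $TL_{(k,i)}$ over $\mathbb{Q}(A)$.
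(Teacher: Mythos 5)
Your proof is correct and takes essentially the same route as the paper, whose own proof simply cites Definition \ref{CDatum1}, Proposition \ref{CDatum2}, and Proposition \ref{CDatum3}. Your version just makes explicit the routine verifications the paper leaves implicit (finiteness of $\Lambda_{k,i}$ and $T(\lambda)$, injectivity of $C$, freeness of the basis via Corollary \ref{independent} and Lemma \ref{span}, and the observation that an exact product formula a fortiori holds modulo $A^{\lambda}$).
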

\begin{proof}
	This follows directly from definition \ref{CDatum1}, proposition \ref{CDatum2} and proposition \ref{CDatum3}
	\end{proof}

	A Young diagram is a left justified array of boxes $d:=[d_{1},\ldots, d_{k}]$ with $d_{i}$ boxes in the $i^{th}$ row and with $d_{i} \geq d_{i+1}$. Here we will be concerned with 2-row Young diagrams. For a 2-row Young diagram $d=[d_{1},d_{2}]$ we define the weight of the Young diagram to be $\omega(d)=d_{1}-d_{2}$. When considering the basis for the standard Temperley-Lieb algebra constructed by Wenzl one considers the Bratteli diagram of 2-row Young diagrams. The Bratteli diagram along with the weights of each diagram are depicted in figure \ref{bratelli}.

	 Notice that the collection of weights of 2-row Young diagrams with $k*i$ boxes is $\Lambda_{k,i}$. It is straight forward to see that in the case of the standard Temperley-Lieb algebra $TL_{k}$, i.e., the $k^{th}$ 1-colored Temperley-Lieb algebra, the collection of admissible sequences of weight $\lambda \in  \Lambda_{k,1}$ correspond precisely to paths in the Brattelli diagram that start at the top and end at the Young diagram of appropriate weight. Thus, similar to \cite{Wenzl88} the graph basis can be indexed by pairs of standard Young tableaux. For $TL_{(k,i)}$ a similar diagram can be drawn where the graph basis is indexed by pairs of paths. In particular, consider the diagram where the first row is the Young diagram $[i,0]$ and the $k^{th}$ row consists of all 2-row Young diagrams with $ki$ boxes, increasing by weight from left to right. Say that $d_{1}$ is a diagram in row $k-1$ and $d_{2}$ is a diagram in row $k$. There is an edge in the diagram connecting $d_{1}$ to $d_{2}$ exactly when $(\omega(d_{1}),\omega(d_{2}),i)$ is an admissible triple. Then the graph basis for $TL_{(k,i)}$ is indexed by pairs of paths in the diagram. The branching diagram for $TL_{(3,2)}$ is in figure \ref{BW2}.
	
\begin{figure}[ht]
\begin{center}
\includegraphics[scale=.7]{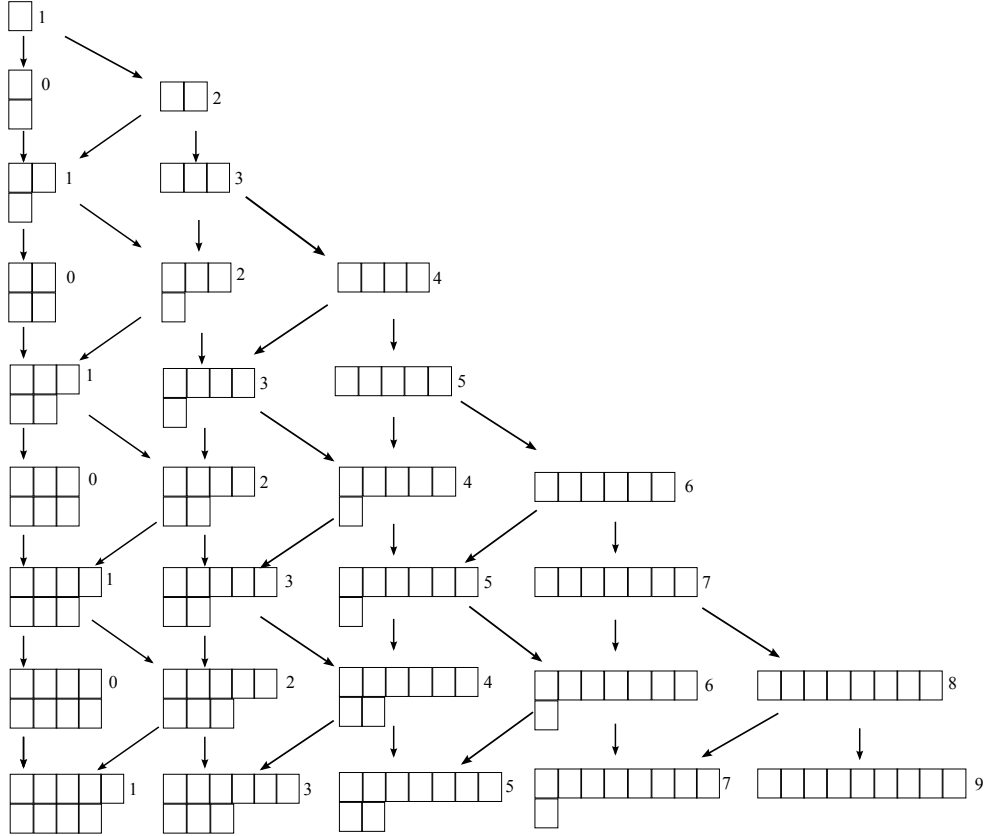}
\end{center}
\caption{Bratteli diagram}
\label{bratelli}
\end{figure}

\begin{figure}[ht]
\begin{center}
\includegraphics[scale=.8]{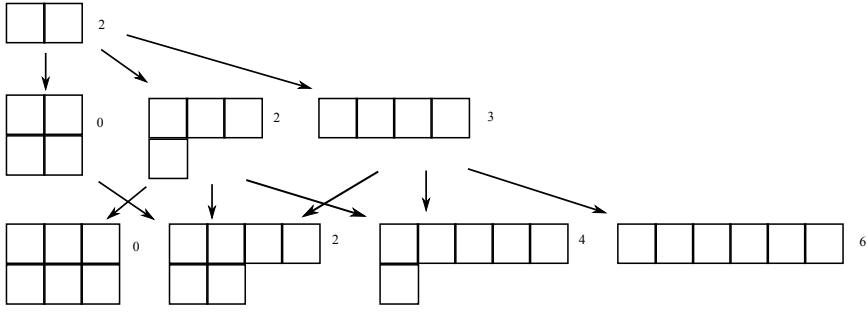}
\end{center}
\caption{Branching diagram for $TL_{(3,2)}$}
\label{BW2}
\end{figure}

\section{ A seperating set of JM-elements}
\label{JM}
	The Jucys-Murphy elements were first used in the representation theory of the symmetric group \cite{J1,murphy81}. A history of the seminormal representation theory of Weyl groups and the role of the JM-elements can be found in 
\cite{ram97}. JM-elements have been found for many different cellular algebras including the Hecke algebras (of various types) and the affine Temperley-Lieb algebra (see \cite{halverson09} for example). Furthermore, the JM-elements have been axiomatized for algebras built with Jones' basic construction (these include Brauer algebras, BMW algebras, and partition algebras) \cite{goodman11,goodman11a}.
	
\begin{de} \cite{mathas2008}
		Let $A$ be a cellular algebra with cell datum $( \Lambda, T, C)$. A family of JM-elements $\{L_{1}, \ldots, L_{M}\}$ is a family of mutually commuting elements along with scalars $\{c_{t}(i)| t \in T(\Lambda) 1\leq i\leq M\}$. Such that $(L_{i})^{*}=L_{i}$ and for all $\lambda \in \Lambda$ and $s,t \in T(\lambda)$ 

\begin{equation}
L_{i} \cdot a_{s,t}^{\lambda} =c_{s}(i) a_{s,t}^{\lambda} + \sum _{ u \rhd s} r_{s,u} a_{u,t}^{\lambda}\ mod (A^{\lambda})
\end{equation}
 Where $(A^{\lambda})$ is the submodule generated by $\{a^{\mu}_{s,t}|  \mu > \lambda\}$. Futhermore, we say that the JM-elements are separating if for $s \rhd t \in T(\Lambda)$  then for some i, $c_{s}(i) \neq c_{t}(t)$.
		
\end{de}
	The following is a typical definition of the JM-elements for the Temperley-Lieb algebra (see \cite{halverson09}). Here we generalize to the $k^{th}$ $i$-colored Temperley-Lieb algebra $TL_{(k,i)}$.
\begin{de}
 Let $L_{1}$ be the identity element in $TL_{(k,i)}$, and let $L_{j}$ be given as in figure \ref{JMElements}.
 
 \begin{figure}[ht]

 \[ L_{j} = 
 \begin{array}{c}
 \includegraphics{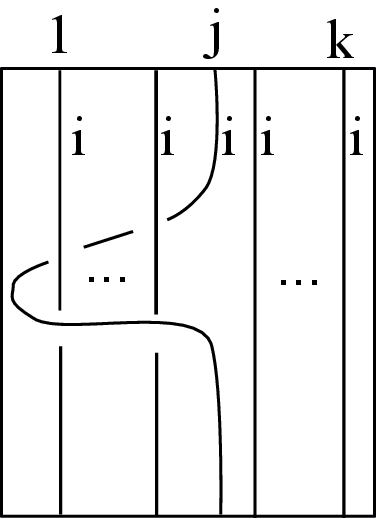}
 \end{array}
 \]
  \label{JMj}
  \caption{JM-elements}
   \label{JMElements}
  \end{figure}
 \end{de}

\begin{lem}
\label{JMComponents}
	Consider $G_{s,t} \in TL_{(k,i)}$ where $s=(s_{1}, \ldots, s_{k})$ and $t=(t_{1}, \ldots, t_{k})$ (assuming $s_{k}=t_{k}$ of course). Then
	
$$ < L_{1},G_{s,t} >=
\begin{cases}
\Delta_{s_{k}} & \text{if}\ s=t \\
0 & \text{otherwise}
\end{cases}
$$
and 
$$<L_{j},G_{s,t}>=
\begin{cases}
(\lambda_{s_{j}}^{s_{j-1},i})^{2} \Delta_{s_{k}} & \text{if}\ s=t \\
0 & \text{otherwise}
\end{cases}
$$
\end{lem}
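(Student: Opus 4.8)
The plan is to compute the two inner products $\langle L_1, G_{s,t}\rangle$ and $\langle L_j, G_{s,t}\rangle$ directly from the diagrammatic definition, using the recoupling formulas of \cite{kauffman94} together with the orthogonality already established in Lemma \ref{norm}. Recall that $G_{s,t}=\frac{1}{\eta(s)}*D^i_{s_1,\ldots,s_k=t_k,\ldots,t_1}$, so each $G_{s,t}$ is, up to the scalar $\frac{1}{\eta(s)}$, one of the trivalent-graph basis elements $D$. Since the inner product is defined by forming the Kauffman bracket of the closure pictured in figure \ref{InnerProd}, the quantities $\langle L_j, G_{s,t}\rangle$ are the brackets of explicit closed trivalent graphs, and the whole statement reduces to evaluating these closed graphs.

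\medskip

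First I would treat $L_1$, the identity. Here $\langle L_1, G_{s,t}\rangle = \langle 1, G_{s,t}\rangle$, and closing up the identity against $G_{s,t}$ produces exactly the closure that appears in computing $\langle D^i_{\ldots}, D^i_{\ldots}\rangle$ with one of the two factors being the ``straight-across'' admissible graph. More cleanly, one observes that the identity $1 \in TL_{(k,i)}$ decomposes (by Lemma \ref{identity}) into graph basis elements, and pairing against $G_{s,t}$ picks out the diagonal term via Lemma \ref{norm}; carrying out the evaluation of the resulting closed graph collapses the telescoping product $\prod \frac{\theta(s_{j+1},s_j,i)}{\Delta_{s_{j+1}}}$ against the normalizing factor $\frac{1}{\eta(s)}$, leaving precisely $\Delta_{s_k}$ when $s=t$ and $0$ otherwise (the latter by the $\prod \delta_{a_j b_j}$ orthogonality).

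\medskip

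For $L_j$ the computation is the same in spirit but with one extra local feature: the diagram for $L_j$ in figure \ref{JMElements} inserts a small loop/turnback on the $j$-th colored strand. When this is closed against $G_{s,t}$ and the graph is simplified, the only difference from the $L_1$ case is a local trivalent configuration at the $(j-1,j)$ position, which by the twist/curl relation of \cite{kauffman94} evaluates to the eigenvalue factor $\lambda_{s_j}^{s_{j-1},i}$; doing this on both the $s$-side and the $t$-side (or equivalently recognizing the squared contribution from the symmetric closure) yields the factor $(\lambda_{s_j}^{s_{j-1},i})^2$. Multiplying this local factor by the $L_1$ result $\Delta_{s_k}$ gives the claimed value, and the off-diagonal vanishing is again immediate from orthogonality.

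\medskip

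I expect the main obstacle to be the bookkeeping in the $L_j$ case: verifying that the local insertion on the $j$-th strand really contributes exactly $(\lambda_{s_j}^{s_{j-1},i})^2$ and does not disturb the telescoping cancellation elsewhere, so that everything off the diagonal still collapses to $\Delta_{s_k}$ times the local eigenvalue. Concretely, one must confirm that applying the twist coefficient $\lambda_{s_j}^{s_{j-1},i}$ at the relevant vertex is compatible with the normalization $\frac{1}{\eta(s)}$ and that the two factors of $\lambda$ (rather than one) genuinely appear; this is where I would slow down and draw the closed graph explicitly, reducing it by the bubble and twist relations until it matches the Lemma \ref{norm} computation augmented by the local eigenvalue. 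Everything else is a routine consequence of Lemma \ref{norm} and the recoupling identities.
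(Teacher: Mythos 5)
Your proposal is correct and follows essentially the same route as the paper: the paper's proof also closes $L_1$ and $L_j$ against $G_{s,t}$ and evaluates the resulting closed trivalent graph by repeated application of the bubble identity (giving the off-diagonal vanishing and the telescoping cancellation against $\frac{1}{\eta(s)}$, leaving $\Delta_{s_k}$), with the $L_j$ case differing only by two applications of the twist identity, which produce exactly the factor $(\lambda_{s_j}^{s_{j-1},i})^{2}$ you identified.
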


\begin{proof}

In figure \ref{L1IP}, after repeated application of the bubble identity, the first result follows. In figure \ref{LjIP} one applies the bubble identity $j-1$ times, then the twist identity twice and then the bubble identity several more times to confirm the second result.

\begin{figure}[!ht]
\[ < L_{1},G_{s,t} >=\frac{1}{\eta(s)}<
\begin{array}{c}
\includegraphics[scale=.8]{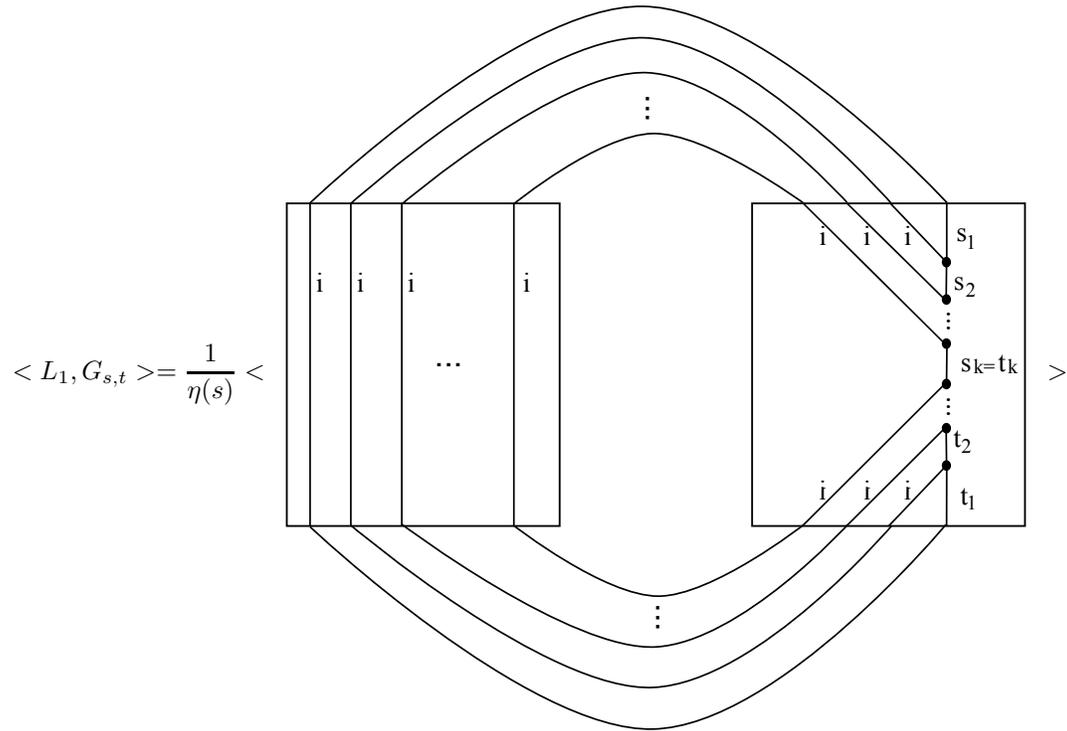}
\end{array}>
\]
\caption{Inner product with the identity element which is $L_{1}$}
\label{L1IP}
\end{figure}

\begin{figure}[!ht]
\[ < L_{j},G_{s,t} >=\frac{1}{\eta(s)}<
\begin{array}{c}
\includegraphics[scale=.8]{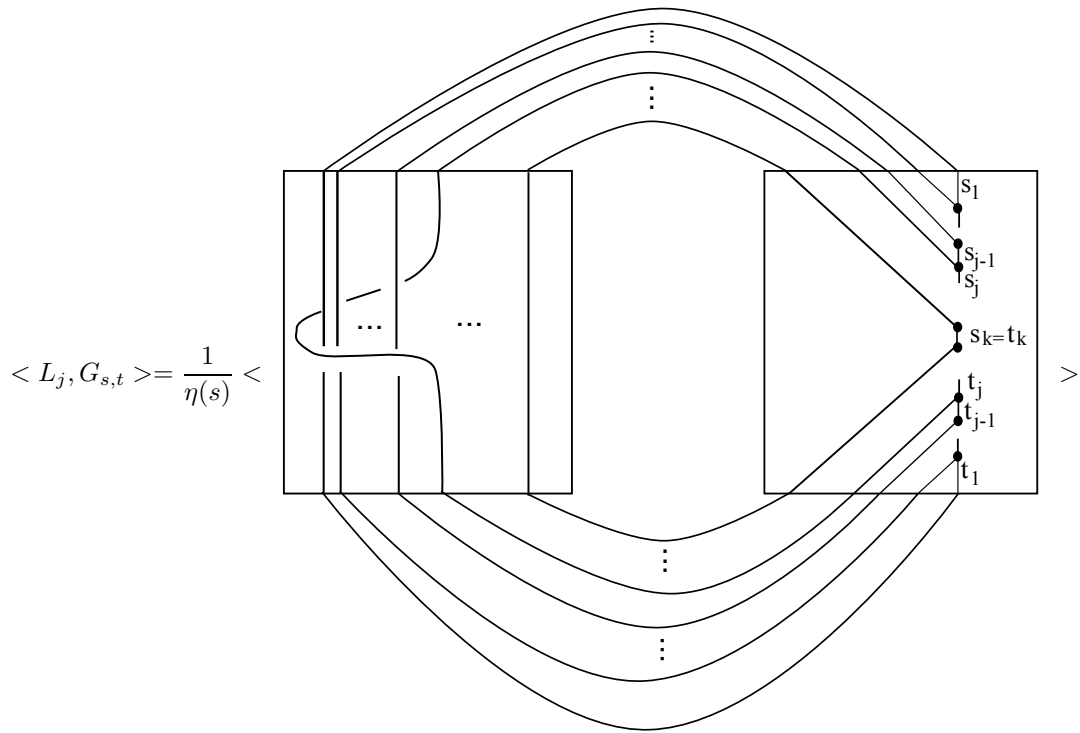}
\end{array}>
\]
\caption{Inner Product with $L_{j}$}
\label{LjIP}
\end{figure}

\end{proof}
	Similar computations as in the proof of lemma \ref{JMComponents} lead to the following statement
\begin{lem}
\label{GBasisInnerProd}
$$\langle G_{s,t},G_{u,v}\rangle=
\begin{cases}
\frac{\eta(t)}{\eta(s)} \Delta_{\omega(s)} & \text{if}\ s=u\ \text{and}\ t=v\\
0 & \text{otherwise}
\end{cases}$$
\end{lem}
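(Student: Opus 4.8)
The plan is to unwind $G_{s,t}$ back to the orthogonal graph basis $\FD(k,i)$ and then feed the result into the explicit norm formula of Lemma \ref{norm}. Write $s=(s_{1},\dots,s_{k})$ and $t=(t_{1},\dots,t_{k})$ with common weight $\lambda=s_{k}=t_{k}=\omega(s)$, and abbreviate by $(s;t)$ the concatenated index sequence $(s_{1},\dots,s_{k-1},\lambda,t_{k-1},\dots,t_{1})$ of length $2k-1$, so that by Definition \ref{CDatum1} one has $G_{s,t}=\tfrac{1}{\eta(s)}\,{*}D^{i}_{(s;t)}$. Since $*$ is realized diagrammatically by reflection, and the closure used to define the inner product (figure \ref{InnerProd}) turns the reflected picture into an isotopic, crossingless colored graph whose bracket is an evaluation of reflection-invariant quantities $\Delta_{\bullet}$ and $\theta(\bullet,\bullet,\bullet)$, the bracket is unchanged; hence $\langle *x,*y\rangle=\langle x,y\rangle$ and
$$\langle G_{s,t},G_{u,v}\rangle=\frac{1}{\eta(s)\,\eta(u)}\,\big\langle D^{i}_{(s;t)},D^{i}_{(u;v)}\big\rangle.$$

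I would then apply Lemma \ref{norm} to the right-hand side. The factor $\prod_{j}\delta_{a_{j}b_{j}}$ there forces the index sequences $(s;t)$ and $(u;v)$ to agree entry by entry; reading off the first $k$ and the last $k-1$ coordinates, this is precisely the condition $s=u$ and $t=v$. This settles the off-diagonal case and produces the value $0$.

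For the surviving diagonal term $\langle D^{i}_{(s;t)},D^{i}_{(s;t)}\rangle$ the remaining work is bookkeeping on the product $\Delta_{a_{2k-1}}\prod_{j=1}^{2k-2}\theta(a_{j+1},a_{j},i)/\Delta_{a_{j+1}}$ from Lemma \ref{norm} with $a=(s;t)$, which I would split at the middle index. The factors from the forward half $j=1,\dots,k-1$ reconstitute $\eta(s)$ verbatim. For the backward half $j=k,\dots,2k-2$ I would reindex by $m=2k-j$ and use the symmetry $\theta(x,y,i)=\theta(y,x,i)$: the numerators then match those of $\eta(t)$, while the denominators differ by $\Delta_{t_{m-1}}$ versus $\Delta_{t_{m}}$, so the ratio telescopes to $\Delta_{t_{k}}/\Delta_{t_{1}}=\Delta_{\lambda}/\Delta_{i}$ and the backward half equals $\eta(t)\,\Delta_{\lambda}/\Delta_{i}$. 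Since $a_{2k-1}=t_{1}=i$, the leading factor is $\Delta_{a_{2k-1}}=\Delta_{i}$, and the three contributions multiply to $\Delta_{i}\cdot\eta(s)\cdot\eta(t)\Delta_{\lambda}/\Delta_{i}=\eta(s)\eta(t)\Delta_{\lambda}$. Dividing by $\eta(s)\eta(u)=\eta(s)^{2}$ yields $\tfrac{\eta(t)}{\eta(s)}\Delta_{\omega(s)}$, as claimed.

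I expect the only genuinely delicate point to be this telescoping in the backward half: one must invoke the symmetry of $\theta$ and carefully track the mismatch between the ``$\Delta_{s_{j+1}}$ in the denominator'' convention built into $\eta$ and the reversed ordering of the $t$-indices, which is exactly what converts the naive leading factor $\Delta_{i}$ into the desired $\Delta_{\omega(s)}$. Everything else is direct substitution into Lemma \ref{norm}.
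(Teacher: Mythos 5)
Your computation is correct. With $a=(s;t)=(s_{1},\dots,s_{k-1},\lambda,t_{k-1},\dots,t_{1})$, the delta factors in Lemma \ref{norm} do force exactly $s=u$ and $t=v$ (using $t_{k}=s_{k}=u_{k}=v_{k}$ for the middle entry); the forward half of the product is $\eta(s)$ verbatim; the backward half, after the reindexing $m=2k-j$ and the symmetry $\theta(x,y,i)=\theta(y,x,i)$ (standard from \cite{kauffman94}), telescopes to $\eta(t)\,\Delta_{t_{k}}/\Delta_{t_{1}}=\eta(t)\,\Delta_{\lambda}/\Delta_{i}$; and the prefactor $\Delta_{a_{2k-1}}=\Delta_{i}$ cancels, giving $\eta(s)\eta(t)\Delta_{\lambda}$ and hence $\tfrac{\eta(t)}{\eta(s)}\Delta_{\omega(s)}$ after dividing by $\eta(s)\eta(u)=\eta(s)^{2}$. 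Your route, however, is genuinely different from the paper's: the paper gives no computation for this lemma at all, asserting only that it follows from ``similar computations as in the proof of Lemma \ref{JMComponents}'', i.e.\ a fresh diagrammatic argument closing the two graphs and repeatedly collapsing bubbles (and twists). You instead treat Lemma \ref{norm} as a black box and reduce everything to index bookkeeping; this buys a proof that needs no new diagram manipulation and makes visible exactly where the asymmetric factor $\eta(t)/\eta(s)$ and the weight $\Delta_{\omega(s)}$ (rather than $\Delta_{i}$) come from, namely the telescoping product $\prod_{m}\Delta_{t_{m}}/\Delta_{t_{m-1}}$, which is precisely the delicate point you flagged. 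One small remark: the ``$*$'' appearing in Definition \ref{CDatum1} is just scalar multiplication, not the anti-involution (that map is only defined later, in Proposition \ref{CDatum3}, in terms of the $G$-basis), so your digression on $\langle *x,*y\rangle=\langle x,y\rangle$ is unnecessary --- though harmless, since either reading of the notation leads to the same off-diagonal vanishing and the same diagonal value.
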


\begin{rem}
	Notice that by lemma \ref{GBProd} $G_{s,s}$ is an idempotent and that by lemma \ref{GBasisInnerProd}\ $<G_{s,s},G_{s,s}>=\Delta_{\omega(s)}$.
	\end{rem}
	
\begin{prop}
\label{JMCoeff}
	In $TL_{(k,i)}$, for $j=2...k$, 
	
	$$L_{j}=\sum_{s \in T(\Lambda_{k,i})} (\lambda_{s_{j}}^{s_{j-1},i})^{2} G_{s,s}$$
	
	and
	
	$$ L_{1}=\sum_{s \in T(\Lambda_{k,i})} G_{s,s}$$
\end{prop}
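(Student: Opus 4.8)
The plan is to exploit the fact that the graph basis is orthogonal with respect to the inherited inner product, so that the coefficient of each basis element in the expansion of $L_j$ can be recovered by pairing $L_j$ against that element. First I would use that $\{G_{u,v}\}$ is a basis of $TL_{(k,i)}$ to write $L_j$ uniquely as $L_j = \sum_{\lambda \in \Lambda_{k,i}} \sum_{u,v \in T(\lambda)} c_{u,v} G_{u,v}$ for scalars $c_{u,v} \in \mathbb{Q}(A)$, and likewise for $L_1$. The goal is then to show that $c_{u,v}=0$ unless $u=v$, and that $c_{s,s} = (\lambda_{s_j}^{s_{j-1},i})^2$ (respectively $c_{s,s}=1$ for $L_1$).

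Next, for an arbitrary admissible pair $(s,t)$ I would compute $\langle L_j, G_{s,t}\rangle$ in two independent ways. On one hand, expanding $L_j$ in the basis and using bilinearity of the inner product together with Lemma \ref{GBasisInnerProd}, every term $\langle G_{u,v}, G_{s,t}\rangle$ vanishes except the one with $(u,v)=(s,t)$, which equals $\frac{\eta(t)}{\eta(s)}\Delta_{\omega(s)}$; hence $\langle L_j, G_{s,t}\rangle = c_{s,t}\,\frac{\eta(t)}{\eta(s)}\Delta_{\omega(s)}$. On the other hand, Lemma \ref{JMComponents} gives $\langle L_j, G_{s,t}\rangle = (\lambda_{s_j}^{s_{j-1},i})^2 \Delta_{s_k}$ when $s=t$ and $0$ otherwise (and $\langle L_1, G_{s,t}\rangle = \Delta_{s_k}$ when $s=t$, else $0$). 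Equating these two expressions, and using that $\frac{\eta(t)}{\eta(s)}\Delta_{\omega(s)}$ is nonzero for admissible sequences (each $\eta$ is nonzero and $\Delta_{\omega(s)}\neq 0$), I would conclude $c_{s,t}=0$ for $s\neq t$, while for $s=t$ the equation $c_{s,s}\Delta_{s_k} = (\lambda_{s_j}^{s_{j-1},i})^2 \Delta_{s_k}$ forces $c_{s,s} = (\lambda_{s_j}^{s_{j-1},i})^2$; the $L_1$ case is identical with scalar $1$.

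The calculation is essentially forced once the two preceding lemmas are in hand, so I do not expect a serious obstacle. The one point that requires care is the justification that the coefficients can be read off in this manner, which rests on the orthogonality in Lemma \ref{GBasisInnerProd}: the Gram matrix of the graph basis is diagonal with nonzero diagonal entries, so the inner product is non-degenerate on the basis and each $c_{u,v}$ is uniquely determined. As a consistency check I would observe that the resulting identity $L_1 = \sum_s G_{s,s}$ is precisely the decomposition of the identity into the orthogonal idempotents $G_{s,s}$ supplied by Lemma \ref{GBProd}, which reconfirms that $L_1$ is the identity element of $TL_{(k,i)}$.
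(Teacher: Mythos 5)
Your proposal is correct and follows essentially the same route as the paper: the paper's proof is precisely the orthogonal-projection computation, reading off the coefficient of $G_{s,s}$ as $\langle L_{j},G_{s,s}\rangle/\langle G_{s,s},G_{s,s}\rangle$ using Lemma \ref{JMComponents} and the orthogonality in Lemma \ref{GBasisInnerProd}. Your write-up merely makes explicit the vanishing of the off-diagonal coefficients and the non-degeneracy justification, which the paper leaves implicit.
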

\begin{proof}

Observe that projecting onto the graph basis the coefficient of $G_{s,s}$ in $L_{j}$ is

$$\frac{<L_{j},G_{s,s}>}{<G_{s,s},G_{s,s}>}=\frac{(\lambda_{s_{j}}^{s_{j-1},i})^{2} \Delta_{s_{k}} }{\Delta_{s_{k}}}$$
\end{proof}

\begin{rem}
 From the definition of the operators $L_{i}$ it is clear that they are mutually commuting. Furthermore, since each $L_{i}$ is in the span of the idempotent elements $\{G_{s,s}|s  \in T(\Lambda_{(k,i)})\}$, each of which are fixed by the map $*$, we see that $(L_{i})^{*}=L_{i}$. Lastly observe that for $s=(s_{1}, \ldots, s_{k})$
 
 $$ L_{j}\cdot G_{s,t}=(\lambda_{s_{j}}^{s_{j-1},i})^{2} G_{s,t}$$
 Thus let $c_{s}(j)=(\lambda_{s_{j}}^{s_{j-1}i})^{2}$.
 \end{rem}
 
 In what follows it will be convenient to define the collection of all eigenvalues of $L_{j}$ acting on elements of the graph basis.
 \begin{de}
 	$\mathcal{C}(j)=\{c_{s}(j) | s\in T(\Lambda_{k,i})\}$
	\end{de}
 
 \begin{prop}
 Say that for $s,t \in T(\Lambda_{k,i})$ that $L_{j}G_{s,v}=L_{j} G_{t,u}$ ( for arbitrary $v$ and $u$ with the same weight as $s$ and $t$ respectively) for $1 \leq j \leq k$. Then $s=t$.
 \end{prop}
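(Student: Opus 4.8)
The plan is to reduce the assertion to the statement that the tuple of content scalars $(c_s(2),\ldots,c_s(k))$ determines the $i$-admissible sequence $s$, and then to read off each entry $s_j$ from these scalars by an explicit computation. By the remark following Proposition \ref{JMCoeff}, each $L_j$ acts diagonally on the graph basis, $L_j\cdot G_{s,v}=c_s(j)G_{s,v}$ with $c_s(j)=(\lambda_{s_j}^{s_{j-1},i})^2$ (and $c_s(1)=1$ for every $s$, since $L_1$ is the identity). I therefore read the hypothesis $L_jG_{s,v}=L_jG_{t,u}$ as the statement that $L_j$ acts by the same scalar on $G_{s,v}$ as on $G_{t,u}$, i.e. $c_s(j)=c_t(j)$ for all $j$; this is the meaningful content, since otherwise the relation $c_s(j)G_{s,v}=c_t(j)G_{t,u}$ would equate two distinct basis elements with nonzero coefficients. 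In this form the proposition is exactly the separating property of the JM-elements.

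The next step is to make $c_s(j)$ explicit. Using the twist coefficient from recoupling theory \cite{kauffman94}, for an admissible triple $(a,b,c)$ one has $\lambda_c^{a,b}=(-1)^{(a+b-c)/2}A^{(a(a+2)+b(b+2)-c(c+2))/2}$, where admissibility (evenness of $a+b+c$) guarantees that the exponent is an integer. Squaring removes the sign, so with $a=s_{j-1}$, $b=i$, $c=s_j$ I obtain the monomial
\[ c_s(j)=(\lambda_{s_j}^{s_{j-1},i})^2=A^{\,s_{j-1}(s_{j-1}+2)+i(i+2)-s_j(s_j+2)}. \]
Because we work over $\mathbb{Q}(A)$ with $A$ an indeterminate, two such monomials agree if and only if their exponents agree, so $c_s(j)=c_t(j)$ is equivalent to
\[ s_{j-1}(s_{j-1}+2)-s_j(s_j+2)=t_{j-1}(t_{j-1}+2)-t_j(t_j+2). \]

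Finally I would run an induction on $j$. The base case is immediate: since $s$ and $t$ are $i$-admissible, $s_1=t_1=i$. For the inductive step, assuming $s_{j-1}=t_{j-1}$, the exponent identity above collapses to $s_j(s_j+2)=t_j(t_j+2)$; as $x\mapsto x(x+2)=(x+1)^2-1$ is strictly increasing on the non-negative integers, this forces $s_j=t_j$. Letting $j$ range over $2,\ldots,k$ yields $s_j=t_j$ for every $j$, hence $s=t$.

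The part requiring the most care is the explicit twist-coefficient formula for $\lambda_{s_j}^{s_{j-1},i}$ together with the verification that, once squared, it is a single power of $A$ whose exponent is a strictly monotone function of $s_j$ for fixed $s_{j-1}$; this monotonicity is what forces separation, and everything else is a short induction anchored at $s_1=t_1=i$. A secondary point worth stating cleanly at the outset is the interpretation of the hypothesis as equality of eigenvalue scalars, since $L_j$ acts merely as a scalar on each graph-basis element.
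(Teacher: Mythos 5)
Your proposal is correct and takes essentially the same route as the paper: an induction anchored at $s_{1}=t_{1}=i$, reducing each step to the injectivity of $s_{j}\mapsto(\lambda_{s_{j}}^{s_{j-1},i})^{2}$ for fixed $s_{j-1}$. The only difference is that you fill in the explicit twist-coefficient formula, the equality-of-exponents argument, and the monotonicity of $x\mapsto x(x+2)$, all of which the paper compresses into the phrase that the claim ``follows directly from the definition of $\lambda_{p}^{qr}$.''
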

 \begin{proof}
 From the definition of $\lambda_{p}^{qr}$ it follows directly that if $(\lambda_{s_{j}}^{s_{j-1} i})^{2}=(\lambda_{t_{j}}^{t_{j-1} i})^{2}$ and $s_{j-1}=t_{j-1}$ then $s_{j}=t_{j}$. Since $s_{1}=t_{1}=i$, considering $L_{2}G_{s,v}=L_{2} G_{t,u}$, it follows that $s_{2}=t_{2}$. Continuing on one sees that $s_{j}=t_{j}$ for all $j$. Thus $s=t$. It follows that if $s \neq t$ then $L_{j}G_{s,v} \neq L_{j} G_{t,u}$ for some j. The proposition follows.
 \end{proof}
 
The above proposition along with the previous remark show that 

\begin{thm}
$\{ L_{1}, \ldots, L_{k}\}$ are a separating family of JM-elements for $TL_{(k,i)}$
\end{thm}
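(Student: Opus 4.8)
The plan is to verify, one clause at a time, each condition in the definition of a separating family of JM-elements, drawing throughout on the explicit description of the $L_j$ already obtained. By Proposition \ref{JMCoeff} every $L_j$ lies in the $\mathbb{Q}(A)$-span of the diagonal idempotents $G_{s,s}$; concretely $L_j = \sum_{s \in T(\Lambda_{k,i})} c_s(j)\, G_{s,s}$ with $c_s(j) = (\lambda_{s_j}^{s_{j-1},i})^2$ for $j \geq 2$ and $c_s(1) = 1$. This single formula drives all of the required properties, so I would begin by recording it and then check the conditions in turn.

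First I would establish mutual commutativity. Since each $L_j$ is supported on the idempotents $G_{s,s}$, and these are orthogonal by Lemma \ref{GBProd} (that is, $G_{s,s}\, G_{t,t} = \delta_{s,t}\, G_{s,s}$), the product $L_j L_{j'}$ collapses to $\sum_s c_s(j)\, c_s(j')\, G_{s,s}$, which is visibly symmetric in $j$ and $j'$; hence $L_j L_{j'} = L_{j'} L_j$. Next, self-adjointness $(L_j)^* = L_j$ follows because each $G_{s,s}$ is fixed by the anti-isomorphism $*$ of Proposition \ref{CDatum3} and the coefficients $c_s(j)$ are scalars.

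I would then verify the triangular action axiom, which in the present setting is in fact diagonal: using orthogonality of the idempotents once more gives $L_j \cdot G_{s,t} = c_s(j)\, G_{s,t}$ exactly, with no error terms. Thus the defining relation holds with all off-diagonal coefficients $r_{s,u}$ equal to zero, and a fortiori modulo $A^{\lambda}$. Identifying $a_{s,t}^{\lambda}$ with $G_{s,t}$, this is precisely the JM-condition.

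Finally, the separating property is the one substantive point, and it is exactly the content of the preceding proposition: I would invoke the fact that the eigenvalue assignment $s \mapsto (c_s(1), \ldots, c_s(k))$ is injective on $T(\Lambda_{k,i})$. Consequently, whenever $s \neq t$ (in particular whenever $s \rhd t$) there is some index $j$ with $c_s(j) \neq c_t(j)$, which is the definition of separating. The crux of this last step --- and the only place where anything beyond bookkeeping is required --- is the recovery of $s_j$ from the pair $(s_{j-1}, (\lambda_{s_j}^{s_{j-1},i})^2)$; this rests on the explicit form of the twist coefficient $\lambda_a^{b,c}$ from recoupling theory together with an induction starting from $s_1 = t_1 = i$, both of which are supplied by the preceding proposition. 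Assembling these verifications completes the proof.
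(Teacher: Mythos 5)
Your proposal is correct and follows essentially the same route as the paper: it assembles the decomposition $L_j=\sum_s c_s(j)\,G_{s,s}$ from Proposition \ref{JMCoeff}, the commutativity, $*$-invariance, and diagonal action recorded in the paper's remark, and the injectivity-of-eigenvalues proposition (recovering $s_j$ from $s_{j-1}$ and $(\lambda_{s_j}^{s_{j-1},i})^2$ by induction from $s_1=i$) to get the separating property. The only cosmetic difference is that you derive mutual commutativity from orthogonality of the idempotents $G_{s,s}$ rather than directly from the diagrammatic definition of the $L_j$, which is a harmless variant.
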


 Having made this observation several results of \cite{mathas2008} follow directly. In particular, using the JM-elements, there is a basis constructed  that contains a family of primitive idempotents. In fact, the graph basis $\{G_{s,t}\}$ \textit{is} this basis. This is the focus of the following section.
 
 \section{Consequences of Cellularity}
 \label{CC}
 Consider the following element defined in \cite{mathas2008}. For $t\in T(\lambda)$ let 
 $$F_{t}= \prod_{j=1}^{k} \prod _{\stackrel{c \in \mathcal{C}(j)}{c \neq c_{t}(j)}}\frac{L_{j}-c}{c_{t}(j)-c}$$

\begin{prop}
\label{Ft}
For $t\in T(\lambda)$,
$$G_{t,t}=F_{t}$$
\end{prop}

\begin{lem}
Let $$F_{t}^{j}=\prod _{\stackrel{c \in \mathcal{C}(j)}{c \neq c_{t}(j)}}\frac{L_{j}-c}{c_{t}(j)-c}$$

Then
$$F_{t}^{j} \cdot G_{u,v} =
\begin{cases}
G_{u,v} & \text{if}\ c_{t}(j)=c_{u}(j) \\
0 & \text{otherwise}
\end{cases}
$$
\end{lem}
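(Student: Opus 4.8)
The plan is to treat $F_{t}^{j}$ as a one-variable polynomial in the single element $L_{j}$ and to exploit the fact, recorded in the remark following Proposition \ref{JMCoeff}, that each graph basis element is an eigenvector for left multiplication by $L_{j}$: namely $L_{j}\cdot G_{u,v}=c_{u}(j)\,G_{u,v}$ with eigenvalue $c_{u}(j)=(\lambda_{u_{j}}^{u_{j-1},i})^{2}$. This reduces the assertion to a purely scalar computation.

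First I would observe that, since $F_{t}^{j}$ is a polynomial with scalar coefficients in the single element $L_{j}$, applying it to the eigenvector $G_{u,v}$ amounts to substituting the eigenvalue $c_{u}(j)$ for $L_{j}$. Hence
$$F_{t}^{j}\cdot G_{u,v}=\left(\prod_{\substack{c \in \mathcal{C}(j) \\ c \neq c_{t}(j)}}\frac{c_{u}(j)-c}{c_{t}(j)-c}\right)G_{u,v},$$
and it remains only to evaluate the scalar coefficient in the two cases of the statement.

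Next I would split into cases. If $c_{t}(j)=c_{u}(j)$, then in every factor the numerator $c_{u}(j)-c$ equals the denominator $c_{t}(j)-c$, and none of these denominators vanishes since the product omits the index $c=c_{t}(j)$; thus every factor is $1$ and the scalar is $1$, giving $F_{t}^{j}\cdot G_{u,v}=G_{u,v}$. If instead $c_{t}(j)\neq c_{u}(j)$, then $c_{u}(j)$ lies in $\mathcal{C}(j)$ by definition and is distinct from $c_{t}(j)$, so the value $c=c_{u}(j)$ occurs among the factors; that factor has numerator $c_{u}(j)-c_{u}(j)=0$, so the entire scalar vanishes and $F_{t}^{j}\cdot G_{u,v}=0$.

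There is essentially no obstacle here: this is the standard Lagrange-interpolation / spectral-projection identity, and indeed $F_{t}^{j}$ is precisely the spectral projection onto the $c_{t}(j)$-eigenspace of left multiplication by $L_{j}$. The only points requiring care are bookkeeping ones, namely confirming that the product runs over the \emph{set} $\mathcal{C}(j)$ of distinct eigenvalues so that the excluded index $c=c_{t}(j)$ is exactly the one that would otherwise create a $0/0$ indeterminacy in the first case, and confirming that $c_{u}(j)\in\mathcal{C}(j)$ so that the annihilating factor is genuinely present in the second case.
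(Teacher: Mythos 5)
Your proof is correct and follows essentially the same route as the paper's: both rest on the eigenvector identity $L_{j}\cdot G_{u,v}=c_{u}(j)\,G_{u,v}$, kill the product via the factor at $c=c_{u}(j)$ when $c_{t}(j)\neq c_{u}(j)$, and observe that every factor acts as the identity when $c_{t}(j)=c_{u}(j)$. The only cosmetic difference is that you collapse the whole product to a scalar before the case split, whereas the paper examines the factors within each case directly.
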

\begin{proof}
First suppose that $c_{u}(j) \neq c_{t}(j)$. Then $F_{t}^{j}$ contains the term
$$\frac{L_{j}\cdot G_{u,v}-c_{u}(j) G_{u,v}}{c_{t}(j)-c_{u}(j)}=\frac{c_{u}(j)-c_{u}(j)}{c_{t}(j)-c_{u}(j)}\ G_{u,v}=0$$
while if $c_{t}(j)=c_{u}(j)$ then each term is
$$\frac{L_{j}\cdot G_{u,v}-c G_{u,v}}{c_{t}(j)-c}=\frac{c_{t}(j)-c}{c_{t}(j)-c}\ G_{u,v}=G_{u,v}$$
\end{proof}

\begin{lem}
For $t\in T(\lambda)$,
$$ F_{t} \cdot G_{u,v}=
\begin{cases}
G_{u,v} & \text{if}\ t=u \\
0 & \text{otherwise}
\end{cases}
$$
\end{lem}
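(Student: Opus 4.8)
The plan is to write $F_t$ as the product $\prod_{j=1}^{k} F_t^j$ of the single-index operators treated in the preceding lemma and to apply these factors to $G_{u,v}$ one at a time. First I would record that, by the remark following Proposition \ref{JMCoeff}, each $G_{u,v}$ is a simultaneous eigenvector of the elements $L_1, \ldots, L_k$, with $L_j \cdot G_{u,v} = c_u(j)\, G_{u,v}$. Since $F_t^j$ is a polynomial in $L_j$ alone, it too acts on $G_{u,v}$ as a scalar, and the preceding lemma already computes that scalar: $F_t^j \cdot G_{u,v} = G_{u,v}$ when $c_t(j) = c_u(j)$ and $F_t^j \cdot G_{u,v} = 0$ otherwise. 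In particular every factor carries $G_{u,v}$ to a scalar multiple of itself, so the factors may be applied in any order.

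Assembling the product, I would conclude that applying $F_t^1, \ldots, F_t^k$ in succession returns $G_{u,v}$ exactly when $c_t(j) = c_u(j)$ for every $j$, and returns $0$ the moment a single index fails this equality; that is,
$$ F_t \cdot G_{u,v} = \begin{cases} G_{u,v} & \text{if } c_t(j)=c_u(j)\ \text{for all } j, \\ 0 & \text{otherwise.} \end{cases} $$
It then remains to recognise that the eigenvalue condition ``$c_t(j) = c_u(j)$ for all $j$'' is equivalent to $t = u$.

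This last equivalence is the one genuinely nontrivial point, and it is precisely where the separating property of the JM-elements is used. The implication from $t=u$ is immediate. For the converse I would argue as in the separating proposition established earlier: writing $c_t(j) = (\lambda_{t_j}^{t_{j-1},i})^2$ and $c_u(j) = (\lambda_{u_j}^{u_{j-1},i})^2$, and using $t_1 = u_1 = i$, an induction on $j$ shows $t_j = u_j$ for all $j$, since equality of the squared coefficients together with $t_{j-1} = u_{j-1}$ forces $t_j = u_j$. Substituting $t = u$ into the displayed formula yields the claim. I expect the main obstacle to be not the bookkeeping of the factors $F_t^j$ --- which is routine given the preceding lemma and commutativity --- but rather this final identification, which depends on the separating property rather than on any formal algebra.
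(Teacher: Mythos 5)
Your proof is correct and follows essentially the same route as the paper's: factor $F_t$ into the operators $F_t^j$, apply the preceding lemma factor by factor, and invoke the separating property of the JM-elements to convert the condition $c_t(j)=c_u(j)$ for all $j$ into $t=u$. The only difference is that you spell out the separating-property induction explicitly, whereas the paper simply cites it; both are the same argument.
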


\begin{proof}
Notice that $F_{t}^{j} \cdot G_{u,v}=0$ for some $j$ unless $c_{t}(j)=c_{u}(j)$ for $j=1\ldots k$. However, as the JM-elements separate $TL_{(k,i)}$ we see that $F_{t} \cdot G_{u,v} \neq 0$ only if $u=t$. When $u=t$ then clearly $F_{t} \cdot G_{t,v}=G_{t,v}$ 
\end{proof}

\begin{proof}[ Proof of proposition \ref{Ft}]

Let us express $F_{t}$ in terms of the graph basis. To that end, the coefficient of $G_{v,u}$ is 

$$\frac{<F_{t},G_{v,u}>}{<G_{v,u},G_{v,u}>}$$

Notice then that 
\begin{eqnarray*}
<F_{t},G_{v,u}>&=&<F_{t} \cdot G_{u,v},Id>\\
&=&\delta_{t,u}<G_{u,v},Id>\\
&=&\delta_{t,u} \delta_{u,v}<G_{u,u},Id>\\
&=&\delta_{t,u} \delta_{u,v}<G_{u,u}\cdot G_{u,u},Id>\\
&=&\delta_{t,u} \delta_{u,v}<G_{u,u},G_{u,u}>
\end{eqnarray*}

The result follows.

\end{proof}

Mathas  uses the elements $F_{t}$ to construct a new cellular basis from the original one (\cite{mathas2008} \textit{definition 3.1}). In particular, he defines $f_{st}=F_{s}a_{st}F_{t}$, where $a_{st}$ is the original cellular basis. In the case considered here this becomes $f_{st}=G_{ss}G_{st}G_{tt}=G_{st}$. Thus, the construction of this new basis results in the original basis.  Mathas's results now apply to this graph basis, as well as several other theorems on cellular algebras with a set of separating JM-elelemts. To avoid a recapitulation of those results we state several corollaries here with reference to \cite{mathas2008}. 

\begin{cor}
The right ideals generated by the idempotent elements of the graph basis $\{ G_{s,s}TL_{k,i}=span\{G_{s,t}| \omega(s)=\omega(t)\} | s \in \Lambda_{k,i}\}$ form a complete set of pairwise non-isomorphic irreducible $TL_{k,i}$-modules.
\end{cor}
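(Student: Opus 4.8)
The plan is to read off the full Artin--Wedderburn structure of $TL_{(k,i)}$ directly from Lemma \ref{GBProd}, after which the corollary becomes the standard representation theory of a direct sum of matrix algebras. The key observation is that Lemma \ref{GBProd} says the graph basis elements multiply exactly like matrix units: $G_{s,t}\cdot G_{u,v}=\delta_{t,u}G_{s,v}$. In particular a product is nonzero only when $t=u$, which forces $\omega(s)=\omega(t)=\omega(u)=\omega(v)$, so basis elements belonging to different weights annihilate one another.

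First I would group the basis by weight and set $A_{\lambda}=\mathrm{span}\{G_{s,t}:\omega(s)=\omega(t)=\lambda\}$ for each $\lambda\in\Lambda_{k,i}$. Because $\{G_{s,t}\}$ is a basis of $TL_{(k,i)}$ and cross-weight products vanish, $TL_{(k,i)}=\bigoplus_{\lambda\in\Lambda_{k,i}}A_{\lambda}$ as a direct sum of two-sided ideals, and the matrix-unit multiplication rule identifies each $A_{\lambda}$ with the full matrix algebra $M_{n_{\lambda}}(\mathbb{Q}(A))$, where $n_{\lambda}=|T(\lambda)|$. The block identity is $\sum_{\omega(s)=\lambda}G_{s,s}$, and summing over $\lambda$ recovers $L_{1}=1$ by Proposition \ref{JMCoeff}. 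This is the Wedderburn decomposition, and it incidentally reproves that $TL_{(k,i)}$ is semisimple over $\mathbb{Q}(A)$.

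Next I would identify the modules in the statement and verify the three required properties. For a fixed $i$-admissible $s$ with $\omega(s)=\lambda$, Lemma \ref{GBProd} gives $G_{s,s}\cdot G_{u,v}=\delta_{s,u}G_{s,v}$, so $G_{s,s}TL_{(k,i)}=\mathrm{span}\{G_{s,v}:\omega(v)=\lambda\}$; this is precisely the $s$-th ``row'' of the block $A_{\lambda}$, on which $A_{\mu}$ acts as zero for $\mu\neq\lambda$, and as a simple right module of $M_{n_{\lambda}}(\mathbb{Q}(A))$ regarded over all of $TL_{(k,i)}$ it is irreducible. Any two such modules of the same weight are isomorphic, since left multiplication by $G_{u,s}$ (which exists because $\omega(u)=\omega(s)$) carries $G_{s,v}$ to $G_{u,v}$ and hence gives a right-module isomorphism $G_{s,s}TL_{(k,i)}\to G_{u,u}TL_{(k,i)}$ with inverse left multiplication by $G_{s,u}$; this justifies choosing one representative per weight. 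Modules of distinct weights $\lambda\neq\mu$ are non-isomorphic because the central idempotent $\sum_{\omega(s)=\lambda}G_{s,s}$ acts as the identity on the first and as zero on the second. Completeness follows since every irreducible module over $\bigoplus_{\lambda}M_{n_{\lambda}}(\mathbb{Q}(A))$ is the simple module of exactly one block, and our list supplies exactly one per block; this is also the specialization of Mathas's classification \cite{mathas2008} to the separating JM-data established above.

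I do not expect a genuine obstacle: once Lemma \ref{GBProd} is in hand the matrix-unit structure does all the work. The only point requiring care is the bookkeeping that cross-weight products vanish, so that the block decomposition is a direct sum of two-sided ideals rather than merely a spanning statement; everything else is routine semisimple module theory.
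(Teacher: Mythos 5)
Your proof is correct, but it takes a genuinely different route from the paper. The paper disposes of this corollary in one line by citing Corollaries 3.10 and 3.11 of Mathas's general theory of cellular algebras with a separating family of JM-elements: once Theorem \ref{CA} (cellularity) and the separating JM-elements of Section \ref{JM} are established, the classification of irreducibles is quoted as a black box. You instead bypass the cellular machinery entirely and read the Artin--Wedderburn decomposition directly off Lemma \ref{GBProd}: the relations $G_{s,t}G_{u,v}=\delta_{t,u}G_{s,v}$ exhibit the graph basis as a union of matrix-unit systems, one per weight $\lambda\in\Lambda_{k,i}$, so $TL_{(k,i)}\cong\bigoplus_{\lambda}M_{n_{\lambda}}(\mathbb{Q}(A))$ as a direct sum of two-sided ideals (your care that cross-weight products vanish, and that $\sum_s G_{s,s}=L_1=1$ via Proposition \ref{JMCoeff}, is exactly what makes this legitimate), after which irreducibility of the rows, isomorphism within a weight via left multiplication by $G_{u,s}$, non-isomorphism across weights via central idempotents, and completeness are standard semisimple module theory. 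What your approach buys: it is elementary, self-contained, and makes explicit the semisimplicity and the full Wedderburn structure of $TL_{(k,i)}$ over $\mathbb{Q}(A)$, which the paper never states outright. What the paper's approach buys: brevity, and a framework that would still produce the classification (via cell modules and radical quotients) in settings where the basis does not multiply as exact matrix units --- e.g.\ over $\mathbb{Z}[A^{\pm1}]$ or at specializations of $A$ where the bilinear form degenerates; your argument is genuinely special to the semisimple situation, though that is precisely the situation of this paper.
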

\begin{proof}
This follows directly from corollaries 3.10 and 3.11 in \cite{mathas2008}
\end{proof}
 
 \begin{cor}[Corollary to  Theorem 3.16 of \cite{mathas2008}]
 \hspace{5 pt}
 \begin{enumerate}
 \item For each $t \in T(\Lambda_{k,i})$ $G_{t,t}$ is a primitive idempotent.
 \item $\sum_{t \in T(\lambda)} G_{t,t}$ is a primitive central idempotent.
 \item $\{G_{t,t}| t \in T(\Lambda_{k,i})\}$ is a complete set of pairwise orthogonal idempotents.
 \item $\{\sum_{t \in T(\lambda)} G_{t,t}| \lambda \in T(\Lambda_{k,i})\}$ is a complete set of pairwise orthogonal idempotents.
 \end{enumerate}
 \end{cor}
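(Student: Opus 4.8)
The plan is to derive all four assertions directly from the multiplicative structure of the graph basis recorded in Lemma \ref{GBProd}, together with the identification $L_{1}=\mathrm{Id}=\sum_{s}G_{s,s}$ from Proposition \ref{JMCoeff}, reserving the representation-theoretic input (semisimplicity over $\mathbb{Q}(A)$ and the irreducibility of the cell modules from the preceding corollary, equivalently Theorem 3.16 of \cite{mathas2008}) for the two primitivity statements. The identification $G_{t,t}=F_{t}$ of Proposition \ref{Ft} is precisely what makes that machinery applicable, so each claim can also be read off from \cite{mathas2008} once $G_{t,t}=F_{t}$ is in hand.

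First I would dispose of the purely combinatorial facts. By Lemma \ref{GBProd} we have $G_{t,t}\cdot G_{t,t}=G_{t,t}$ and $G_{s,s}\cdot G_{t,t}=0$ for $s\neq t$, so $\{G_{t,t}\mid t\in T(\Lambda_{k,i})\}$ is a set of pairwise orthogonal idempotents; completeness is exactly $\sum_{s}G_{s,s}=L_{1}=\mathrm{Id}$ from Proposition \ref{JMCoeff}, which gives (3). Writing $e_{\lambda}=\sum_{t\in T(\lambda)}G_{t,t}$, orthogonality of the $G_{t,t}$ yields at once $e_{\lambda}^{2}=e_{\lambda}$ and $e_{\lambda}e_{\mu}=0$ for $\lambda\neq\mu$, while $\sum_{\lambda}e_{\lambda}=\sum_{s}G_{s,s}=\mathrm{Id}$; this is (4). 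For the centrality half of (2), I would evaluate on a basis element $G_{u,v}$ (necessarily with $\omega(u)=\omega(v)$) using Lemma \ref{GBProd}: one finds $e_{\lambda}G_{u,v}=G_{u,v}$ if $\omega(u)=\lambda$ and $0$ otherwise, and likewise $G_{u,v}e_{\lambda}=G_{u,v}$ if $\omega(v)=\lambda$ and $0$ otherwise. Since $\omega(u)=\omega(v)$ these agree on every basis element, so $e_{\lambda}$ is central.

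The remaining content is primitivity. For (1), the key observation is that the right module $G_{t,t}TL_{(k,i)}=\mathrm{span}\{G_{t,v}\mid\omega(v)=\omega(t)\}$ is precisely one of the irreducible modules exhibited in the preceding corollary; since an idempotent is primitive exactly when the right ideal it generates is indecomposable, and an irreducible module is a fortiori indecomposable, $G_{t,t}$ is primitive. For (2) I would argue that $e_{\lambda}$ is the block idempotent attached to the weight $\lambda$: the idempotents $\{G_{t,t}\mid t\in T(\lambda)\}$ are pairwise orthogonal and each generates an irreducible right module, all isomorphic to the single cell module of weight $\lambda$ (the isomorphism being $G_{t,v}\mapsto G_{t',v}$). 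Over $\mathbb{Q}(A)$, where $TL_{(k,i)}$ is semisimple, their sum $e_{\lambda}$ is therefore the identity of the corresponding simple block, and a block identity is a primitive central idempotent.

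I expect the main obstacle to be the primitivity statements (1) and (2), since these are the only parts that are not formal consequences of the multiplication rule. They require the representation-theoretic facts that the cell modules $G_{t,t}TL_{(k,i)}$ are irreducible (hence indecomposable) and that distinct weights index distinct blocks, which in turn rest on the semisimplicity of $TL_{(k,i)}$ over $\mathbb{Q}(A)$ and on the classification of irreducibles in the preceding corollary. By contrast, the bookkeeping in (3) and (4) and the centrality computation in (2) are routine applications of Lemma \ref{GBProd} and Proposition \ref{JMCoeff}.
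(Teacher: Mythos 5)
Your proof is correct, but it takes a genuinely different route from the paper. The paper gives no self-contained argument at all: having shown in Proposition \ref{Ft} that $F_{t}=G_{t,t}$, and hence that Mathas's basis $f_{st}=F_{s}G_{st}F_{t}$ coincides with the graph basis $G_{st}$, it simply imports Theorem 3.16 of \cite{mathas2008} wholesale, which is why the statement is labelled a ``Corollary'' with no proof environment. You instead prove everything internally: (3), (4) and the centrality half of (2) directly from the matrix-unit multiplication rule of Lemma \ref{GBProd} together with $\mathrm{Id}=L_{1}=\sum_{s}G_{s,s}$ from Proposition \ref{JMCoeff}, and the two primitivity claims from the irreducibility of the right ideals $G_{t,t}TL_{(k,i)}$ (the preceding corollary) plus semisimplicity over $\mathbb{Q}(A)$. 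Both routes are sound; the paper's buys brevity and consistency with its stated aim of not recapitulating \cite{mathas2008}, while yours buys independence from Mathas's general machinery and makes visible the real structural reason all four claims hold, namely that Lemma \ref{GBProd} exhibits the $G_{u,v}$ as genuine matrix units, so that $TL_{(k,i)}\cong\bigoplus_{\lambda\in\Lambda_{k,i}}M_{|T(\lambda)|}(\mathbb{Q}(A))$ and each statement is a standard fact about direct sums of full matrix algebras. Two small points of care in your write-up that are worth keeping explicit: the module isomorphism $G_{t,v}\mapsto G_{t',v}$ used in (2) is a right-module map precisely because the structure constants $r_{tvx}$ in Proposition \ref{CDatum2} do not depend on the first index, and the identification of $e_{\lambda}TL_{(k,i)}$ with a single Wedderburn block uses the ``pairwise non-isomorphic'' half of the preceding corollary (distinct weights give distinct simples), not just irreducibility.
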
		
 
\begin{cor}[ Corollary 3.8 \cite{mathas2008}]
 $\{L_{1}, \ldots, L_{k}\}$ generates a maximum abelian subalgebra of $TL_{(k,i)}$.
 \end{cor}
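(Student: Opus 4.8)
The plan is to identify the subalgebra generated by $L_{1},\ldots,L_{k}$ with the ``diagonal'' subalgebra $\mathcal{D}:=\{\,\sum_{s} p_{s}\,G_{s,s}\mid p_{s}\in\mathbb{Q}(A),\ s\in T(\Lambda_{k,i})\,\}$ spanned by the diagonal graph-basis idempotents, and then to show that $\mathcal{D}$ is its own centralizer. Write $\CL$ for the subalgebra generated by $L_{1},\ldots,L_{k}$. First I would note, using Proposition \ref{JMCoeff}, that each $L_{j}$ is a $\mathbb{Q}(A)$-linear combination of the $G_{s,s}$, so that $\CL\subseteq\mathcal{D}$. Conversely, Proposition \ref{Ft} exhibits each $G_{t,t}=F_{t}$ as a polynomial in $L_{1},\ldots,L_{k}$; since the $G_{t,t}$ span $\mathcal{D}$ and $\CL$ is closed under linear combinations, this gives $\mathcal{D}\subseteq\CL$. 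Hence $\CL=\mathcal{D}$. Note that this step is exactly where the separating property of the JM-elements enters, since it is what makes $F_{t}=G_{t,t}$ valid and thus places every $G_{s,s}$ inside $\CL$.

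Next I would record that $\mathcal{D}$ is commutative: by Lemma \ref{GBProd} one has $G_{s,s}\cdot G_{t,t}=\delta_{s,t}\,G_{s,s}$, so the $G_{s,s}$ are pairwise orthogonal idempotents and any two elements of their span commute. It then remains to prove maximality, which I would do by computing the centralizer of $\mathcal{D}$ in $TL_{(k,i)}$ and showing it equals $\mathcal{D}$; since commuting with all of $\mathcal{D}$ is the same as commuting with each generator $G_{s,s}$, this reduces to a computation with the matrix-unit multiplication rule of Lemma \ref{GBProd}.

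To carry this out, expand an arbitrary $x\in TL_{(k,i)}$ in the graph basis as $x=\sum_{\lambda}\sum_{u,v\in T(\lambda)}p^{\lambda}_{u,v}G_{u,v}$ and impose $G_{s,s}x=xG_{s,s}$ for every $s$. By Lemma \ref{GBProd}, $G_{s,s}G_{u,v}=\delta_{s,u}G_{s,v}$ and $G_{u,v}G_{s,s}=\delta_{v,s}G_{u,s}$, so
\begin{equation}
G_{s,s}\,x=\sum_{v\in T(\omega(s))}p^{\omega(s)}_{s,v}\,G_{s,v},\qquad x\,G_{s,s}=\sum_{u\in T(\omega(s))}p^{\omega(s)}_{u,s}\,G_{u,s}.
\notag
\end{equation}
The basis vectors occurring in the first sum all have first index $s$, those in the second all have second index $s$, and the only vector common to both is $G_{s,s}$. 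Comparing coefficients against the linearly independent graph basis therefore forces $p^{\omega(s)}_{s,v}=0$ for $v\neq s$ and $p^{\omega(s)}_{u,s}=0$ for $u\neq s$. Letting $s$ range over all of $T(\Lambda_{k,i})$ kills every off-diagonal coefficient of $x$, so $x\in\mathcal{D}$; that is, the centralizer of $\mathcal{D}$ is exactly $\mathcal{D}$.

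Finally I would assemble the pieces. If $B$ is any commutative subalgebra with $\CL=\mathcal{D}\subseteq B$, then every element of $B$ commutes with $\mathcal{D}$, so $B$ is contained in the centralizer of $\mathcal{D}$, which is $\mathcal{D}$; hence $B=\mathcal{D}=\CL$. Thus $\CL$ is maximal among commutative subalgebras, proving the corollary. The only genuinely substantive step is the centralizer computation of the third paragraph; the remaining arguments are bookkeeping resting on the two facts already in hand, namely that the $F_{t}$ recover the full diagonal (so $\CL=\mathcal{D}$) and that the graph basis multiplies like matrix units. I expect no serious obstacle, as the separating hypothesis supplies precisely what the argument needs.
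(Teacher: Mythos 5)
Your proof is correct, but it takes a different route from the paper: the paper offers no argument of its own for this corollary, instead deducing it as an instance of Mathas's general Corollary 3.8, which applies to any cellular algebra equipped with a separating family of JM-elements (the paper explicitly says it states these consequences ``with reference to \cite{mathas2008}'' to avoid recapitulating that theory). You instead give a direct, self-contained computation inside $TL_{(k,i)}$: Proposition \ref{JMCoeff} puts the subalgebra $\CL$ generated by the $L_j$ inside the diagonal span $\mathcal{D}$ of the $G_{s,s}$, Proposition \ref{Ft} (which is where the separating property enters, exactly as you flag) gives the reverse inclusion $G_{t,t}=F_t\in\CL$, and then the matrix-unit multiplication rule of Lemma \ref{GBProd} lets you compute that the centralizer of $\mathcal{D}$ is $\mathcal{D}$ itself, from which maximality is immediate. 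Your argument leans on something strictly stronger than cellularity — namely that the graph basis multiplies literally like matrix units, so $TL_{(k,i)}$ is visibly a direct sum of matrix algebras over $\mathbb{Q}(A)$ and $\mathcal{D}$ is the full diagonal — and this is what makes the elementary centralizer computation available here, whereas Mathas's result must work in a general cellular algebra where products $a^{\lambda}_{st}a^{\mu}_{uv}$ are only controlled modulo $A^{\lambda}$. What the paper's citation buys is brevity and placement of the result in the general framework; what your proof buys is independence from \cite{mathas2008} and an explicit identification of the maximal abelian subalgebra as $\mathcal{D}=\operatorname{span}\{G_{s,s}\}$, which the paper's other corollaries also implicitly use. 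The argument is complete as written; the only point worth making explicit is that $\mathcal{D}$ is closed under multiplication (immediate from orthogonality of the $G_{s,s}$), so that ``$\CL\subseteq\mathcal{D}$'' really is an inclusion of subalgebras and not just of spanning sets.
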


	Having these statements together fixes the representation theory of $TL_{(k,i)}$. In particular, this allows one to view the colored Jones polynomial on the same footing as the original definition of the Jones polynomial, as the trace of an irreducible representation.



\section{Recursive tangles and Mahler measure}
\label{MMMT}

	The Mahler measure of a polynomial is defined by 
	
	\begin{de}
\label{MM}
Let $f\in\BC[z_1^{\pm1},\cdots,z_k^{\pm1}]$.
The Mahler measure of $f$ is defined to be
\begin{equation}
M(f)=e^{\int_0^1\cdots\int_0^1\log|f(e^{2\pi i\theta_1}\cdots e^{2\pi i\theta_k})|d\theta_1\cdots d\theta_k}\notag
\end{equation}
\end{de}

	When $f(z)=a_{0}z^{k} \prod_{i=1\ldots n} (z-\alpha_{i})$,  by Jensen's formula $$M(f)=|a_{0}|\prod_{i=1\ldots n} max(1,|\alpha_{i}|)$$ Clearly, the Mahler measure is multiplicative and thus $M(z^{r}f)=M(f)$. With this in mind it is natural to extend the definition of Mahler measure to rational functions of Laurent polynomials.
	
	Several results connect the study of the hyperbolic volume and the Mahler measure of the Jones and colored Jones polynomials  of knots. This connection seems to be mediated by regions of twisting in a knot. In particular, it is a well known result that the twist number of an alternating knot provides upper and lower bounds on the hyperbolic volume of the knot compliment (\cite{lackenby2004} and \cite{brittenham}), i.e., the set of volumes of alternating knots with bounded twist number is bounded. Similarly, if $\mathcal{D}$ is a set of oriented link diagrams with bounded twist number then the set of Mahler measures of the associated Jones polynomials is bounded (\cite{ssilver2006}). A standard result of Thurston says that if $L_{m}$ is obtained from hyperbolic knot $L$ by adding $m$ full twists on $n$-strands then the hyperbolic volume of $L_{m}$ converges as $m \rightarrow \infty$. Again this result is translated to the context of Mahler measure in \cite{CK}; the Mahler measure of the Jones and colored Jones polynomials of a link converges when adding full twists on $n$-strands. 
	
	Paramount to these results on Mahler measure are two important facts. First is that the Mahler measure of a polynomial is bounded above by its length (the sum of the absolute value of its coefficients). Second is theorem \ref{converge}. 
	
	\begin{thm}\cite{La}
\label{converge}
For every $f\in\BC[z_1^{\pm1},\cdots,z_k^{\pm1}]$,
\begin{equation}
M(f)=\lim_{v(x_1,\cdots,x_k)\rightarrow\infty}M(f(z^{x_1},\cdots,z^{x_k})).
\notag
\end{equation}
where
\begin{equation}
v(x_1,\cdots,x_k)=\min\{\max|y_i|\ s.t.\ (y_1,\cdots,y_k)\in\BZ^k, (x_1,\cdots,x_k)\cdot (y_1,\cdots,y_k)=0\}.
\notag
\end{equation}
with the following special case
$$M(f)=lim_{d \rightarrow \infty} M(f(z,z^{d},\ldots, z^{d^{s-1}}))$$
\end{thm}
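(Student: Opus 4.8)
The plan is to pass to the logarithmic Mahler measure $m(f)=\log M(f)=\int_{\mathbb{T}^k}\log|f|\,d\mu_k$, where $\mathbb{T}^k$ is the $k$-torus with normalized Haar measure $\mu_k$, and to its one-variable specializations $m(f_x)=\int_0^1\log|f(e^{2\pi i x_1\theta},\ldots,e^{2\pi i x_k\theta})|\,d\theta$ for $x=(x_1,\ldots,x_k)\in\BZ^k$. Since $\log|f|$ is integrable on the torus for nonzero $f$ and $\exp$ is continuous, it suffices to prove $m(f_x)\to m(f)$ as $v(x)\to\infty$. I would decompose $\log|f|=\log^+|f|-\log^-|f|$, with $\log^+=\max(\log,0)$ and $\log^-=\max(-\log,0)$, and prove convergence of the two pieces separately; the first is bounded and continuous on $\mathbb{T}^k$, whereas the second is unbounded exactly along the zero set of $f$ on the torus.

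For the $\log^+$ piece the mechanism is equidistribution. Since $\log^+|f|$ is continuous and bounded on the compact torus, Stone--Weierstrass lets me approximate it uniformly by trigonometric polynomials $P=\sum_n c_n z^n$. For a single character $z^n$ with $n\neq 0$ one has $\int_0^1 e^{2\pi i(n\cdot x)\theta}\,d\theta=1$ if $n\cdot x=0$ and $0$ otherwise; but a relation $n\cdot x=0$ with $n\neq 0$ is a competitor in the minimum defining $v$, so it forces $v(x)\le\max_i|n_i|$. Consequently, once $v(x)$ exceeds the degree of $P$, the path average $\int_0^1 P(e^{2\pi i x_1\theta},\ldots,e^{2\pi i x_k\theta})\,d\theta$ equals the torus average $\int_{\mathbb{T}^k}P\,d\mu_k$. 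Letting $P\to\log^+|f|$ uniformly then gives $\int_0^1\log^+|f_x|\,d\theta\to\int_{\mathbb{T}^k}\log^+|f|\,d\mu_k$.

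The main obstacle is the $\log^-$ piece, which is singular where $f$ vanishes, so equidistribution for continuous functions does not apply directly. The goal is a uniform integrability estimate: for every $\epsilon>0$ there should be $\delta>0$ with $\int_{\{|f_x|<\delta\}}\log^-|f_x|\,d\theta<\epsilon$ for all $x$ with $v(x)$ large, together with the analogous bound on the torus. The structural fact I would exploit is that if $\alpha_1,\ldots,\alpha_r$ are the exponent vectors of $f$, then $v(x)>\max_{j\neq j'}\max_i|(\alpha_j-\alpha_{j'})_i|$ forces the integers $\alpha_j\cdot x$ to be pairwise distinct, so $f_x$ is a one-variable Laurent polynomial whose multiset of nonzero coefficients is precisely that of $f$; in particular its number of terms and the magnitudes of its coefficients are bounded uniformly in $x$. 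I would then reduce uniform integrability to a uniform upper bound on $\mathrm{Leb}\{\theta:|f_x(\theta)|<t\}$ that is integrable against $d(\log 1/t)$, valid across this family of uniformly sparse exponential sums. Combining the already-handled bounded part (equidistribution, as above) with this tail estimate yields $\int_0^1\log^-|f_x|\,d\theta\to\int_{\mathbb{T}^k}\log^-|f|\,d\mu_k$, and hence the general statement.

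Finally I would derive the stated special case from the general one. For $x=(1,d,d^2,\ldots,d^{s-1})$, suppose $\sum_i y_i d^{i-1}=0$ with $y\neq 0$; isolating the top nonzero coordinate and bounding the lower-order terms by a geometric series shows $\max_i|y_i|>d-1$, so $v(1,d,\ldots,d^{s-1})\ge d\to\infty$ as $d\to\infty$. The general limit therefore specializes to $M(f)=\lim_{d\to\infty}M(f(z,z^d,\ldots,z^{d^{s-1}}))$. I expect the uniform sublevel-set estimate for sparse exponential sums in the third paragraph to be the genuinely difficult step; the equidistribution argument and the base-$d$ specialization are routine by comparison.
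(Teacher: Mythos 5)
First, a point of reference: the paper does not prove this statement at all --- it is imported wholesale from Lawton \cite{La} --- so there is no internal proof to compare against, and your proposal has to stand or fall as a reconstruction of Lawton's argument. Its architecture does match Lawton's: split $\log|f|$ into a bounded part handled by equidistribution and a singular part handled by a uniform integrability estimate near the zero set. Your equidistribution step is correct and complete (the observation that a nonzero frequency $n$ with $n\cdot x=0$ forces $v(x)\le\max_i|n_i|$ is exactly the right use of the definition of $v$), your reduction to exponential sums with the same coefficient multiset as $f$ once $v(x)$ exceeds the differences of exponent vectors is correct, and your derivation of the special case $x=(1,d,\ldots,d^{s-1})$ via $v(x)\ge d$ is fine.

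The genuine gap is the third paragraph. The estimate you defer --- a bound on $\mathrm{Leb}\{\theta:|f_x(\theta)|<t\}$ of a form integrable against $d(\log 1/t)$, with constants depending only on the number of terms and the coefficients of $f$ and \emph{uniform in the exponents} $\alpha_j\cdot x$ --- is not a technical lemma to be waved at; it is the entire content of Lawton's theorem. Every naive approach fails precisely on the uniformity: $f_x$ is a one-variable Laurent polynomial whose degree is comparable to $\max_j|\alpha_j\cdot x|$, which tends to infinity as $v(x)\to\infty$, so any bound obtained by counting zeros of $f_x$ on the circle (covering the sublevel set by small arcs around its at most $\deg f_x$ zeros, say) carries constants that blow up with $v(x)$ and yields nothing in the limit. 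The bound that is actually needed --- for $g(\theta)=\sum_{k=1}^N a_k e^{2\pi i m_k\theta}$ with distinct integer frequencies and $\max_k|a_k|\ge 1$, a sublevel estimate of the form $C(N)\,t^{\beta(N)}$ independent of the $m_k$ --- is Lawton's main lemma, proved in \cite{La} by a delicate induction on the number of terms $N$, and it occupies the technical core of that paper. Since your proposal reduces the theorem to this unproven estimate (and you candidly flag it as the hard step), what you have is a correct outline of the known proof with its hardest ingredient missing, not a proof.
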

	
	In both \cite{CK} (section 2 equation (3)) and \cite{ssilver2006} (Lemma 3.1) showing that the Jones polynomial took a particular form was a major step towards their respective results. In fact, the form that the Jones polynomial took was that of a recursive polynomial, defined as follows.	
	\begin{de}
	\label{recursive}
		Call a family of polynomials $\{p_{k}(z)\}$  recursive if 
		$$p_{k}(z)=\sum_{i=1}^{n} \alpha_{i}(z)^{k} q_{i}(z)$$
		\end{de}

	\begin{cor}
		If $$p_{k}(z)=\sum_{i=1}^{n} \alpha_{i}(z)^{k} q_{i}(z)$$ and $\alpha_{i}(z)$ is a monomial in $z$, then the Mahler measure of $p_{k}$, $M(p_{k}(z))$, converges as $k$ goes to infinity.
		\label{mahlerrecform}
		\end{cor}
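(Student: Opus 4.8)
The plan is to realize the one-parameter family $\{p_{k}(z)\}$ as the specializations of a \emph{single} fixed two-variable Laurent polynomial and then to invoke the special case of Theorem \ref{converge}. Because each $\alpha_{i}(z)$ is a monomial we may write $\alpha_{i}(z)=z^{d_{i}}$ (the leading constants are treated below), so that $\alpha_{i}(z)^{k}=z^{kd_{i}}$ and $k$ enters $p_{k}$ only through an exponent.

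Concretely, I would introduce an auxiliary variable $w$ and set
\[
P(z,w)=\sum_{i=1}^{n} w^{d_{i}}\,q_{i}(z)\in\BC[z^{\pm1},w^{\pm1}].
\]
By construction $P(z,z^{k})=\sum_{i=1}^{n} z^{kd_{i}}q_{i}(z)=p_{k}(z)$, so every member of the family is obtained from the one polynomial $P$ by the substitution $(z_{1},z_{2})\mapsto(z,z^{k})$. The vector $(1,k)$ has $v(1,k)=k$ (any nonzero integer solution of $y_{1}+ky_{2}=0$ has $|y_{1}|=k|y_{2}|\geq k$), which tends to infinity, so the hypothesis of Theorem \ref{converge} is satisfied; equivalently this is precisely the special case with $s=2$. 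Hence
\[
\lim_{k\to\infty} M\big(p_{k}(z)\big)=\lim_{k\to\infty} M\big(P(z,z^{k})\big)=M\big(P(z_{1},z_{2})\big),
\]
a finite number, which is the asserted convergence and moreover identifies the limit as the two-variable Mahler measure of $P$.

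The step deserving care---and the one I expect to be the only real obstacle---is the normalization $\alpha_{i}(z)=z^{d_{i}}$. A general monomial is $\alpha_{i}(z)=c_{i}z^{d_{i}}$, and the reduction produces a $k$-independent $P$ only if the $c_{i}$ do not smuggle $k$ back into the coefficients. If $|c_{i}|\neq1$ the conclusion can genuinely fail: already for $n=1$ one has $M(p_{k})=|c_{1}|^{k}M(q_{1})$, which diverges when $|c_{1}|>1$, so one must assume the monomials have unit-modulus leading coefficient. This holds in the knot-theoretic application, where the twist eigenvalues are $\alpha_{i}(z)=(-1)^{a_{i}}z^{a_{i}(a_{i}+2)}$; there the sign is harmless because $a_{i}(a_{i}+2)\equiv a_{i}\pmod 2$ gives $\alpha_{i}(z)=(-z)^{d_{i}}$, and the Mahler-measure-preserving rotation $z\mapsto -z$ converts $p_{k}$ into $R(y,y^{k})$ for the fixed polynomial $R(y,w)=\sum_{i}w^{d_{i}}q_{i}(-y)$, to which the argument above applies verbatim. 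Apart from this bookkeeping the entire content is carried by Lawton's Theorem \ref{converge}.
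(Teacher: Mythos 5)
Your proof is correct and takes essentially the same route as the paper's (implicit) argument: realize each $p_{k}(z)$ as the specialization $P(z,z^{k})$ of a single fixed two-variable Laurent polynomial $P(z,w)=\sum_{i=1}^{n}w^{d_{i}}q_{i}(z)$ and invoke the special case of Theorem \ref{converge}, so that $M(p_{k})\to M(P(z_{1},z_{2}))$, exactly the two-variable limit the paper points to in the remark following the corollary. Your added caveat that the monomials must have unit-modulus leading coefficients (otherwise already $n=1$, $\alpha_{1}(z)=2z$ gives divergence) is a correct and worthwhile sharpening of the paper's looser wording, and it is consistent with the paper's actual application, where the coefficients are powers of $A$.
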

 	Notice that in the case of a recursive polynomial of the form in definition \ref{recursive}, if for each $i$, $\alpha_{i}(z)$ is a power of $z$ then the Mahler measure of the family of polynomials converges to the Mahler measure of a polynomial in $2$ variables. Furthermore, the second author in \cite{todd12} built several families of links whose Jones polynomials are recursive and showed that their Mahler measures are divergent.

 	With these observations in mind, elements of $TL_{(k,i)}$ that behave recursively are of interest.

	\begin{de}
		Let a $ki$-tangle R be called recursive if as an element of $TL_{(k,i)}$ 
		$$R=\sum_{s \in T(\lambda), \lambda \in \Lambda_{k,i} } \alpha_{s} G_{s,s}$$
		\end{de}
		
		Suppose that R is a recursive $ki$-tangle and $T$ is any $ki$-tangle. Computing the Jones and colored Jones polynomials of the links formed by adding an arbitrary number of copies of $R$ to $T$ is similar to adding twists on some number of strands in a link. That is, it would be interesting to compute $<R^{n},T>$ in $TL_{(k,i)}$.
		
		\begin{prop}
		Say that $R=\sum_{s \in T(\Lambda_{k,i}) } \alpha_{s} G_{s,s}$ is a recursive element  and $T=\sum_{u,v} \beta_{u,v} G_{u,v}$ in $TL_{(k,i)}$. Then 
		
		$$<R^{n},T>=\sum_{u} \alpha_{u}^{n} \beta_{u,u} \Delta_{\omega(u)}$$
		\label{recform}
		\end{prop}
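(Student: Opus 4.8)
The plan is to reduce everything to the two multiplicative lemmas already established, Lemma \ref{GBProd} and Lemma \ref{GBasisInnerProd}, so that the computation becomes a bookkeeping exercise on the diagonal graph basis elements $G_{s,s}$. First I would record that $\{G_{s,s}\}$ is a family of pairwise orthogonal idempotents. This is immediate from Lemma \ref{GBProd}: specializing to $t=s$ and $v=u$ there gives $G_{s,s}\cdot G_{u,u}=G_{s,u}$ when $s=u$ and $0$ otherwise, that is, $G_{s,s}\cdot G_{u,u}=\delta_{s,u}G_{s,s}$. (This is also noted in the remark following Lemma \ref{GBasisInnerProd}.)

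Using orthogonality I would then compute the power $R^{n}$ directly. Since $R=\sum_{s}\alpha_{s}G_{s,s}$, an easy induction on $n$ shows that all cross terms collapse:
$$R^{n}=R^{n-1}\cdot R=\Big(\sum_{s}\alpha_{s}^{n-1}G_{s,s}\Big)\Big(\sum_{u}\alpha_{u}G_{u,u}\Big)=\sum_{s\in T(\Lambda_{k,i})}\alpha_{s}^{n}\,G_{s,s},$$
the last equality again using $G_{s,s}G_{u,u}=\delta_{s,u}G_{s,s}$. So $R$ behaves like a diagonal operator and its powers are obtained simply by raising each coefficient to the $n^{\mathrm{th}}$ power.

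Finally I would expand the inner product bilinearly and invoke Lemma \ref{GBasisInnerProd}. Writing $T=\sum_{u,v}\beta_{u,v}G_{u,v}$,
$$\langle R^{n},T\rangle=\sum_{s}\sum_{u,v}\alpha_{s}^{n}\beta_{u,v}\,\langle G_{s,s},G_{u,v}\rangle.$$
By Lemma \ref{GBasisInnerProd} the term $\langle G_{s,s},G_{u,v}\rangle$ is nonzero only when $u=s$ and $v=s$, in which case it equals $\tfrac{\eta(s)}{\eta(s)}\Delta_{\omega(s)}=\Delta_{\omega(s)}$. Hence only the summands with $u=v=s$ survive, yielding $\langle R^{n},T\rangle=\sum_{s}\alpha_{s}^{n}\beta_{s,s}\Delta_{\omega(s)}$, which is the asserted formula after renaming $s$ to $u$.

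I do not expect a genuine obstacle here; the argument is a direct application of the product and inner-product lemmas. The only point that requires care is the index matching in Lemma \ref{GBasisInnerProd}, which forces \emph{both} indices of $G_{u,v}$ to equal $s$: this is precisely what isolates the diagonal coefficients $\beta_{u,u}$ and discards the entire off-diagonal part of $T$, and it is the step where one must resist the temptation to let the off-diagonal terms contribute.
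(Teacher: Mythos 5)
Your proof is correct and follows essentially the same route as the paper's: collapse $R^{n}$ to $\sum_{s}\alpha_{s}^{n}G_{s,s}$ using the orthogonal-idempotent property from Lemma \ref{GBProd}, then expand $\langle R^{n},T\rangle$ bilinearly and kill all off-diagonal terms via Lemma \ref{GBasisInnerProd}. Your write-up is in fact a bit more explicit than the paper's (spelling out the induction and the index matching), but there is no substantive difference in approach.
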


\begin{proof}

	First notice that $R$ is a linear combination of orthogonal idempotents and thus $R^{n}= \sum_{s} \alpha_{s}^{n} G_{s,s}$. By lemma \ref{GBProd} each non-zero term of the inner product looks like 
	
	$$ \alpha_{s}^{n} \beta_{s,t} <G_{s,s}, G_{s,t}>$$
	
	Then applying lemma \ref{GBasisInnerProd} the result follows.
	
\end{proof}

\begin{thm}
	If $R$ is any product of the JM-elements in $TL_{(k,i)}$ then $R=\sum_{s \in T(\Lambda_{k,i}) } \alpha_{s} G_{s,s}$ and each $\alpha_{s}$ is a power of A.  Furthermore if $T=\sum_{u,v} \beta_{u,v} G_{u,v}$ in $TL_{(k,i)}$ then
	$M(<R^{n},T>)$ converges as $n$ goes to infinity.
	\end{thm}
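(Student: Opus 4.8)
The plan is to combine the eigenvalue description of the JM-elements with the multiplicativity of Mahler measure under the substitution $z \mapsto z^d$ given by Theorem~\ref{converge}. First I would establish the structural claim: since $\{G_{s,s}\}$ is a complete set of pairwise orthogonal idempotents (by Lemma~\ref{GBProd}, $G_{s,s}G_{u,u}=\delta_{s,u}G_{s,s}$), and since each JM-element $L_j$ acts diagonally on the graph basis with $L_j G_{s,t}=c_s(j)G_{s,t}$ where $c_s(j)=(\lambda_{s_j}^{s_{j-1},i})^2$ (from Proposition~\ref{JMCoeff} and the subsequent remark), any product $R=\prod_j L_j^{m_j}$ is again a linear combination $R=\sum_s \alpha_s G_{s,s}$ with $\alpha_s=\prod_j c_s(j)^{m_j}=\prod_j (\lambda_{s_j}^{s_{j-1},i})^{2m_j}$. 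The key point is that each quantity $\lambda_a^{b,c}$ is (up to sign) a power of $A$: these recoupling coefficients are given explicitly in \cite{kauffman94} as $\lambda_a^{b,c}=(-1)^{(a+b-c)/2}A^{a+b-c+(\dots)}$ type monomials, so $\alpha_s$ is $\pm$ a power of $A$. I would cite \cite{kauffman94} for the formula for $\lambda$ and note that $(\lambda_{s_j}^{s_{j-1},i})^2$ removes the sign ambiguity, making each $\alpha_s$ an honest (possibly negative-exponent) power of $A$.

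Next I would invoke Proposition~\ref{recform}, which already gives the closed form
\begin{equation}
\langle R^n,T\rangle=\sum_{u}\alpha_u^{n}\,\beta_{u,u}\,\Delta_{\omega(u)}. \notag
\end{equation}
Writing $A=z$ as the polynomial variable, each $\alpha_u$ is a monomial $z^{e_u}$ for some integer $e_u$, while $\beta_{u,u}\Delta_{\omega(u)}$ is a fixed Laurent polynomial (or rational function) $q_u(z)$ independent of $n$. Hence $\langle R^n,T\rangle=\sum_u (z^{e_u})^n q_u(z)$, which is exactly a recursive family in the sense of Definition~\ref{recursive} with $\alpha_u(z)=z^{e_u}$ a monomial. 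This places us precisely in the hypothesis of Corollary~\ref{mahlerrecform}, whose conclusion is that $M(\langle R^n,T\rangle)$ converges as $n\to\infty$.

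The remaining work is therefore to justify Corollary~\ref{mahlerrecform} in the form I need, or simply to cite it. The substantive idea behind that corollary—and the step I expect to be the main obstacle—is the reduction to a two-variable Mahler measure via Theorem~\ref{converge}. Setting $p_n(z)=\sum_u z^{n e_u}q_u(z)$ and introducing a second variable, one regards $P(z,w)=\sum_u w^{e_u}q_u(z)$, so that $p_n(z)=P(z,z^n)$. The special case of Theorem~\ref{converge} gives $M(P(z,w))=\lim_{d\to\infty}M(P(z,z^d))=\lim_{d\to\infty}M(p_d(z))$, yielding convergence to the finite limit $M(P)$. The delicate point is handling the rational-function extension of Mahler measure (the $\Delta_{\omega(u)}$ may introduce denominators) and confirming that the $e_u$ are genuinely distinct enough—or that the substitution $w=z^n$ does not cause cancellation that invalidates the limit; one must also make sure the monomials $w^{e_u}$ with possibly repeated exponents are collected correctly so that $P$ is a well-defined element of $\mathbb{C}[z^{\pm1},w^{\pm1}]$ (or its field of fractions). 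Once multiplicativity of $M$ over the rational extension and Theorem~\ref{converge} are in hand, the convergence is immediate, so I would keep the proof short by citing Proposition~\ref{recform}, Corollary~\ref{mahlerrecform}, and \cite{kauffman94} for the monomial form of $\lambda$.
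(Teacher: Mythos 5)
Your proposal is correct and takes essentially the same route as the paper: diagonalize $R$ using Proposition \ref{JMCoeff} and the orthogonality of the idempotents $G_{s,s}$, cite \cite{kauffman94} for the monomial form of $\lambda_a^{b,c}$, and then conclude via Proposition \ref{recform} and Corollary \ref{mahlerrecform}. The only differences are minor refinements on your part: you correctly observe that $\lambda_a^{b,c}$ is a power of $A$ only up to sign (the paper calls it a positive power of $A$, but squaring makes this immaterial), and you unpack the two-variable substitution argument behind Corollary \ref{mahlerrecform} (including the rational-function denominators handled by multiplicativity of $M$), which the paper simply cites.
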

	
\begin{proof}
	Since the family of JM-elements are mutually commuting (and $L_{1}=Id$) we may write 
	$$R=L_{2}^{p_{2}}L_{3}^{p_{3}}\ldots L_{k}^{p_{k}}$$
	
	Using proposition \ref{JMCoeff} we find that 
	
	$$R=\sum_{s} (\prod_{j=2}^{k} (\lambda_{s_{j}}^{s_{kj1},i})^{2*p_{j}}) G_{s,s}$$
	Furthermore, $\lambda_{s_{j}}^{s_{j-1},i}$ is a positive power of A for any $(s_{j},s_{j-1},i)$ admissible \cite{kauffman94}. The rest of the theorem follows directly from proposition \ref{recform} and proposition \ref{mahlerrecform}
	
	\end{proof}

	We end this section by noting that the above theorem provides a single proof for the fact that the Mahler measure of the Jones and colored Jones polynomial converges under twisting on multiple strands. Consider the full twist on $n$ strands as an element of $T_{(n,k)}$ as depicted in figure \ref{ft1}.  From this figure it is clear that in fact the full twist on $n$ $k$-colored strands is the product of the JM-elements, $F_{n}=L_{2}\ldots L_{n}$. Thus from the above theorem and proposition \ref{mahlerrecform} we arrive at the results given by Champanerkar and Kofman in 	\cite{CK}, as stated in the following corollary.
\begin{figure}[ht]
\begin{center}
\includegraphics{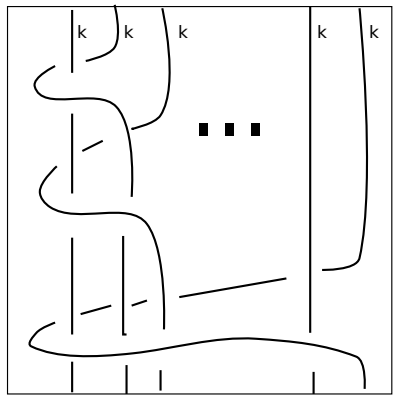}
\end{center}
\caption{Full twist on $n$ strands}
\label{ft1}
\end{figure}

\begin{cor}
The Mahler measure of the Jones and $n^{th}$-colored Jones polynomial converge under the consecutive addition of full twists on $n$ strands.
\end{cor}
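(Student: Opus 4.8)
The plan is to deduce the corollary directly from the preceding theorem, the only genuinely new ingredient being the geometric identification of a full twist with a product of the JM-elements. Working in $TL_{(n,i)}$ --- the algebra of $n$ strands each colored by the $i^{th}$ Jones-Wenzl idempotent --- I would first verify that the full twist on these $n$ strands, regarded as an element of $TL_{(n,i)}$, equals $F_n = L_2 L_3 \cdots L_n$. Reading figure \ref{ft1} from one side, the full twist factors as a sequence of partial twists, the $j^{th}$ of which carries the $j^{th}$ strand once around the bundle of the first $j-1$ strands; each such partial twist is exactly the tangle defining $L_j$ in figure \ref{JMElements}. Since the $L_j$ mutually commute (the remark following proposition \ref{JMCoeff}), the order in which these partial twists are recorded is immaterial and the product is unambiguously $L_2 \cdots L_n$.

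Next I would set up the skein computation for a link obtained by inserting $m$ consecutive full twists on the $n$ colored strands. Cutting the diagram along a circle separating the twist region from the rest of the link expresses the Kauffman bracket of the colored diagram as an inner product $\langle F_n^m, T\rangle$ in $TL_{(n,i)}$, where $T$ encodes the remainder of the link exactly as in figure \ref{CKB}. Passing from the bracket to the colored Jones polynomial introduces the writhe factor $A^{-(i^2+2i)w}$ from the definition of $J_i$; because each added full twist changes the writhe by a fixed amount, this factor is $A^{cm}$ for a constant $c$ independent of $m$, that is, a monomial in $A$ raised to the $m^{th}$ power.

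I would then invoke the theorem proved just above with $R = F_n$. Since $F_n$ is a product of JM-elements, that theorem gives $F_n = \sum_s \alpha_s G_{s,s}$ with each $\alpha_s$ a power of $A$, and proposition \ref{recform} yields
$$\langle F_n^m, T\rangle = \sum_u \alpha_u^m \beta_{u,u} \Delta_{\omega(u)},$$
a recursive family in $m$ whose base functions $\alpha_u$ are monomials in $A$. Multiplying by the writhe monomial $A^{cm}$ only replaces each $\alpha_u$ by $A^c \alpha_u$, again a monomial, so corollary \ref{mahlerrecform} applies and the Mahler measure converges as $m \to \infty$. The ordinary Jones polynomial is the case $i=1$, so this single argument covers both statements at once.

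I expect the main obstacle to be the first step: rigorously matching the full twist to $L_2 \cdots L_n$ at the level of skein elements rather than merely asserting it from the picture. Concretely, using the twist and bubble identities of recoupling theory from \cite{kauffman94}, one must check that passing a through-strand bundle of weight $s_{j-1}$ through the $j^{th}$ partial twist reproduces precisely the scalar $(\lambda_{s_j}^{s_{j-1},i})^2$ attached to $L_j$ in the remark after proposition \ref{JMCoeff}. Once the eigenvalues on each idempotent $G_{s,s}$ are seen to agree, the identification follows because the $G_{s,s}$ simultaneously diagonalize the commuting family $\{L_j\}$. The remaining steps are bookkeeping: confirming that the writhe correction is genuinely one monomial per twist and recalling that Mahler measure is unchanged by such a factor since $M(z^r f) = M(f)$.
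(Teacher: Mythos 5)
Your proof is correct and takes essentially the same route as the paper: identify the full twist on $n$ colored strands with the product $L_{2}\cdots L_{n}$ of JM-elements and then apply the preceding theorem together with corollary \ref{mahlerrecform}. You are in fact somewhat more careful than the paper, which simply asserts the identification from figure \ref{ft1} and passes over the writhe normalization in the definition of the colored Jones polynomial, a factor you correctly observe is a single monomial per added twist and hence harmless for Mahler measure.
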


\section*{Acknowledgements}
XC is partially supported by research assistantship funded by NSF-DMS-0905736.

\bibliographystyle{plain}
\bibliography{Bib}

\begin{thebibliography}{10}

\bibitem{brittenham}
Mark Brittenham.
\newblock Bounding canonical genus bounds volume.
\newblock {\em arXiv preprint math/9809142}, 1998.

\bibitem{C}
X.~Cai.
\newblock A gram determinant for Lickorish's bilinear form.
\newblock {\em Mathematical Proceedings of the Cambridge Philosophical
  Society}, 151:83--94, July 2011.

\bibitem{CK}
A.~Champanerkar and I.~Kofman.
\newblock On the Mahler measure of Jones polynomials under twisting.
\newblock {\em Algebraic \& Geometric Topology}, 5:1--20, 2005.

\bibitem{G}
Par Gilmer.
\newblock Lecture notes.

\bibitem{goodman11}
Frederick~M Goodman and John Graber.
\newblock Cellularity and the Jones basic construction.
\newblock {\em Advances in Applied Mathematics}, 46(1):312--362, 2011.

\bibitem{goodman11a}
Frederick~M Goodman and John Graber.
\newblock On cellular algebras with Jucys Murphy elements.
\newblock {\em Journal of Algebra}, 330(1):147--176, 2011.

\bibitem{halverson09}
Tom Halverson, Manuela Mazzocco, and Arun Ram.
\newblock Commuting families in Hecke and Temperley-Lieb algebras.
\newblock {\em Nagoya Mathematical Journal}, 195:125--152, 2009.

\bibitem{J1}
A.~A. Jucys.
\newblock On the Young operators of symmetric groups.
\newblock {\em Lithuanian Phys. J.}, 6:163--180, 1966.

\bibitem{kauffman94}
Louis~H Kauffman and S{\'o}stenes Lins.
\newblock {\em Temperley-Lieb Recoupling Theory and Invariants of 3-Manifolds
  (AM-134)}, volume 134.
\newblock Princeton University Press, 1994.

\bibitem{lackenby2004}
Marc Lackenby.
\newblock The volume of hyperbolic alternating link complements.
\newblock {\em Proceedings of the London Mathematical Society}, 88(1):204--224,
  2004.

\bibitem{La}
W.~Lawton.
\newblock A problem of Boyd concerning geometric means of polynomials.
\newblock {\em Journal of Number Theory}, 16:356--362, 1983.

\bibitem{mathas2008}
Andrew Mathas and Marcos Soriano.
\newblock Seminormal forms and Gram determinants for cellular algebras.
\newblock {\em Journal f{\"u}r die reine und angewandte Mathematik (Crelles
  Journal)}, 2008(619):141--173, 2008.

\bibitem{murphy81}
GE~Murphy.
\newblock A new construction of Young's seminormal representation of the
  symmetric groups.
\newblock {\em Journal of Algebra}, 69(2):287--297, 1981.

\bibitem{ram97}
Arun Ram.
\newblock Seminormal representations of Weyl groups and Iwahori-Hecke algebras.
\newblock {\em Proceedings of the London Mathematical Society}, 75(01):99--133,
  1997.

\bibitem{ssilver2006}
Daniel S. Silver, Alexander Stoimenow, and Susan G. Williams.
\newblock Euclidean Mahler measure and twisted links.
\newblock {\em Algebraic \&amp; Geometric Topology}, 6:581--602, 2006.

\bibitem{todd12}
Robert~G Todd.
\newblock Some families of links with divergent Mahler measure.
\newblock {\em Geometriae Dedicata}, 159(1):337--351, 2012.

\bibitem{Wenzl88}
Hans Wenzl.
\newblock Hecke algebras of type $A_{n}$ and subfactors.
\newblock {\em Invent. Math.}, 92(2):349--383, 1988.

\end{thebibliography}










\end{document}